\theoremstyle{plain}
\newtheorem{theorem}{Theorem}
\newtheorem{lemma}{Lemma}
\newtheorem{corollary}{Corollary}
\newtheorem{prop}{Proposition}
\theoremstyle{definition}
\theoremstyle{remark}
\newtheorem{remark}{Remark}
\newtheorem{proper}{Property}
\newtheorem{statment}{Statment}
\newtheorem{defin} {Definition}
\newtheorem*{main_theorem}{Main Theorem}
\begin{document}
\selectlanguage{english}

\title[] {Minimal generating systems and properties of Sylow 2-subgroups of alternating group}

\author[]{Ruslan Skuratovskii}


\begin{abstract}

   The background of this paper is the following:  search  of the minimal systems of generators for this class of group which still was not founded also problem of representation for this class of group, exploration of systems of generators for Sylow 2-subgroups $Syl_2{A_{2^k}}$ and $Syl_2{A_{n}}$ of alternating group, finding  structure these subgroups.

The authors of \cite{Dm} didn't proof minimality of finding by them system of generators for such Sylow 2-subgroups of $A_n$ and structure of it were founded only descriptively.
	The aim of this paper is to research the structure of Sylow 2-subgroups and to construct a minimal generating system for such subgroups. In other words, the problem is not simply in the proof of existence of a generating set with elements for Sylow 2-subgroup of alternating group of degree $2^k $ and its constructive proof and proof its minimality. For the construction of minimal generating set we used the representation of elements of group by automorphisms of portraits for binary tree. Also, the goal of this paper is to investigate the structure of 2-sylow subgroup of alternating group more exactly and deep than in \cite{Dm}.
 The main result is the proof of minimality of this generating set of the above described subgroups and also the description of their structure.
Key words:  minimal system of generators; wreath product; Sylow subgroups; group; semidirect product.

\end{abstract}

\maketitle

\section{Introduction }
The aim of this paper is to research the structure of Sylow 2-subgroups of $A_{2^k}$, $A_n$ and to construct a minimal generating system for it. Case of Sylow subgroup where $p=2$ is very special because group $C_2\wr C_2\wr C_2 \ldots \wr C_2 $ admits odd permutations, this case was not fully investigated in \cite{Dm,Sk}. There was a mistake in a statement about irreducibility that system of $k+1$ elements for $Syl_2(A_{2^k})$ which was in abstract \cite{Iv} on Four ukraine conference
of young scientists in 2014 year.
All undeclared terms are from \cite{Ne, Gr}.
A minimal system of generators for a Sylow 2-subgroups of $A_n$ was found.
 We denote by $v_{j,i}$ the vertex of $X^j$, which has the number $i$.

\section{ Main result  }

Let's denote by $X^{[k]}$ a regular truncated binary rooted tree (with number of levels from $0$ to $k$ but active states are only from $X^l, 0 \leq l \leq k-1$) labeled by vertex, $v_{j,i} X^{[k-j]} $ -- subtree of $X^{[k]}$ with root in $v_{j,i}$.
If the state in the vertex is non-trivial, then its label is 1, in other case
it is 0.
An automorphism of $X^{[k]}$ with non-trivial state in some of
 $v_{1,1}$, $v_{1,{2}}$, $v_{2,{1}}$,..., $v_{2,4}$, ... ,$v_{m,1}$, ... ,$v_{m,j}$, $ m < k, j \leq 2^m$ is denoted by $\mathop{\beta}_{1,(i_{11},i_{12});...; l,(i_{l1},...,i_{l2^l});...;{m},(i_{m1},...,i_{m2^m})}$ 
where the indexes $l$ is the numbers of levels with non-trivial states, in
parentheses after this numbers we denote a cortege of vertices of this level,  $i_{mj}=0$ if state in $v_{mj}$ is trivial $i_{mj}=1$ in other case. If for some $l$ all $i_{lj}=0$ then cortege $l,(i_{l1} ,..., i_{l 2^l})$ does not figure in indexes of $\beta$. But if numbers of vertexes with active states are certain, for example $v_{j,1}$ and $v_{j,s}$ we can use more easy notation $\mathop{\beta}_{j,(1,s);}$. If in parentheses only one index then parentheses can be omitted for instance $\mathop{\beta}_{j,(s);}=\mathop{\beta}_{j,s;}$.
Denote by $\tau_{i,...,j}$ the automorphism, which has a non-trivial vertex permutation (v.p.)
only in vertices $v_{k-1,i}$, ... ,$v_{k-1, j}$, $j  \leq 2^{k-1} $ of the
level $X^{k-1}$.
Denote by $\tau$ the automorphism $\tau_{1,2^{k-1}}$.
Let's consider special elements such that: $ \mathop{\alpha}_{0}=\mathop{\beta}_{0}=\mathop{\beta}_{0,(1,0,...,0)}, \mathop{\alpha}_{1}=\mathop{\beta}_{1}=\mathop{\beta}_{1,(1,0,...,0)}
, \ldots ,\mathop{\alpha}_{l}=\mathop{\beta}_{l}=\mathop{\beta}_{l,(1,0,...,0)} $.

 \begin{lemma} \label{even} 

  Every state from $X^l$, $l<k-1$ determines an even permutation on $X^{k}$.

\end{lemma}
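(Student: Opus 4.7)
The plan is to reduce the statement to a parity count on the leaves of the tree $X^{[k]}$. Recall that the underlying action of an automorphism of $X^{[k]}$ on $X^k$ is determined by how the vertex permutations (v.p.) propagate down to the leaves, so it suffices to analyze what a single non-trivial v.p. at a vertex of level $l$ does to the $2^k$ leaves at level $k$.

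First I would fix an arbitrary vertex $v_{l,i}$ with $l<k-1$ carrying a non-trivial state, and assume all other v.p.\ are trivial, since the group operation and the parity homomorphism are multiplicative and it is enough to check generators one at a time. A non-trivial state at $v_{l,i}$ is the transposition of the two children of $v_{l,i}$, and hence swaps the two subtrees $v_{l+1,2i-1}X^{[k-l-1]}$ and $v_{l+1,2i}X^{[k-l-1]}$ rigidly. The induced action on the set of leaves at level $k$ therefore pairs up the $2^{k-l-1}$ leaves below one child with the $2^{k-l-1}$ leaves below the other child.

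Next I would express this action as a product of disjoint transpositions: the permutation induced on $X^k$ is precisely a product of $2^{k-l-1}$ disjoint $2$-cycles (one per pair of corresponding leaves), while all leaves not descending from $v_{l,i}$ are fixed. Its sign is therefore $(-1)^{2^{k-l-1}}$.

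Finally I would conclude by the arithmetic restriction: since $l<k-1$ we have $k-l-1\geq 1$, so $2^{k-l-1}$ is an even integer and the sign is $+1$. Hence every state sitting on a level strictly below $k-1$ induces an even permutation on $X^k$, as claimed. The only mildly delicate point is bookkeeping the indices so that the pairing of leaves across the swapped subtrees is transparent; once that is done, the parity argument is immediate, and this is where I would be most careful in the write-up.
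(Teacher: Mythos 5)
Your proof is correct and follows essentially the same route as the paper: a non-trivial state at a level-$l$ vertex swaps the two child subtrees rigidly, inducing a product of $2^{k-l-1}$ disjoint transpositions on the leaves, which is even precisely because $l<k-1$ forces $2^{k-l-1}\geq 2$. Your write-up is in fact more explicit than the paper's (which records the same cycle structure with some typographical slips in the exponents), but there is no substantive difference in the argument.
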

\begin{proof}
Actually every transposition from $X^l$, $l<k-1$ acts at even number of pair of vertexes because of binary tree structure, so it realize even permutation (of set which consists of some vertexes of $X^{k}$) with cyclic structure \cite{Sh}  $(1^{2^{k-1}-2^{k-l-l}},2^{2^{k-l-l}})$ because it formed by the structure of binary tree.
 \end{proof}

\begin{corollary}\label{B_k-1}

Due to Lemma \ref{even} vertex permutations from $Aut X^{[k-1]}=\langle \mathop{\alpha}_{0}. ... , \mathop{\alpha}_{k-2} \rangle$ form a group: ${B_{k-1}}= \underbrace {C_2 \wr ...\wr C_2}_{k-1} $ which acts at $X^{k-1}$ by even permutations. Order of $B_{k-1}$ equal to $2^{2^{k-1}-1}$.

\end{corollary}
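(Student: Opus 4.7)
The plan is to establish three things from which the Corollary follows. First, the parity claim: by Lemma~\ref{even}, every generator $\alpha_l$ (for $l=0,\ldots,k-2$) has its unique non-trivial state on a level strictly below $k-1$, so each $\alpha_l$ acts on the leaf set $X^{k-1}$ as an even permutation. Since the even permutations of a finite set form a subgroup, every element of the generated group acts evenly on $X^{k-1}$, giving the inclusion $B_{k-1} \hookrightarrow A_{2^{k-1}}$.

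Second, I would identify the abstract group and compute its order via the standard Kaloujnine description of the automorphism group of a regular rooted binary tree as an iterated wreath product of $C_2$'s, with one wreath factor for each level of internal (i.e.\ active-state) vertices. The tree $X^{[k-1]}$ has internal vertices on levels $0,1,\ldots,k-2$, so $\operatorname{Aut} X^{[k-1]} \cong \underbrace{C_2 \wr \cdots \wr C_2}_{k-1}$. The total number of internal vertices is $1+2+\cdots+2^{k-2} = 2^{k-1}-1$, and since each such vertex independently carries a binary label, the order $|B_{k-1}|=2^{2^{k-1}-1}$ follows immediately.

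Third, I would verify the generation claim $\langle\alpha_0,\ldots,\alpha_{k-2}\rangle = B_{k-1}$, which is the main technical step. I would proceed by induction on $k$. The base $k=2$ is trivial since $\alpha_0$ is the non-identity of $C_2$. For the inductive step, conjugating $\alpha_l$ by elements of $\langle \alpha_0,\ldots,\alpha_{l-1}\rangle$ (which, by the inductive hypothesis, realizes the full $B_l$ and hence acts transitively on the $2^l$ vertices of level $l$) relocates the unique non-trivial state at $v_{l,1}$ to any prescribed $v_{l,j}$, producing every single-vertex automorphism $\beta_{l,j}$. Products of these realize every configuration on level $l$, and composing across levels exhausts $B_{k-1}$.

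The delicate point, and the main obstacle, is justifying that the conjugation action of $\langle \alpha_0,\ldots,\alpha_{l-1}\rangle$ on the vertices of level $l$ is genuinely transitive (in fact, realizes the full wreath action on that level), so that no single-vertex generator $\beta_{l,j}$ escapes the subgroup already constructed. Once this transitivity is in place, the passage from "one non-trivial state at $v_{l,1}$" to "arbitrary non-trivial configuration on level $l$" is a routine product, and the assembly of the wreath product from its level-indexed layers is precisely the content of the Kaloujnine normal form invoked in the second step.
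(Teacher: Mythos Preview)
Your parity argument contains a genuine error, though it is provoked by what is almost certainly a typo in the Corollary. Lemma~\ref{even} asserts that a state on level $l<k-1$ induces an even permutation on $X^{k}$, not on $X^{k-1}$. Your inference that each $\alpha_l$ acts evenly on the leaf set $X^{k-1}$ is false: the generator $\alpha_{k-2}$ has its unique active state at $v_{k-2,1}$, and on $X^{k-1}$ this swaps exactly the two vertices $v_{k-1,1}$ and $v_{k-1,2}$, a single transposition. Hence $B_{k-1}$ does not embed in $A_{2^{k-1}}$ (already for $k=2$ one has $|B_1|=2>1=|A_2|$). In the context of the paper, where $B_{k-1}$ is combined with $W_{k-1}$ to form $G_k\simeq \mathrm{Syl}_2 A_{2^k}\le A_{2^k}$, the intended statement is that $B_{k-1}$, regarded inside $\operatorname{Aut}X^{[k]}$ with trivial states on level $k-1$, acts on $X^{k}$ by even permutations; that is exactly what Lemma~\ref{even} gives, since all its active states sit on levels $\le k-2<k-1$.

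Your second and third steps are correct and considerably more explicit than the paper, which offers no proof beyond ``Due to Lemma~\ref{even}''. The wreath-product identification and the order $2^{2^{k-1}-1}$ are treated as standard, and the generation claim $\langle\alpha_0,\dots,\alpha_{k-2}\rangle=\operatorname{Aut}X^{[k-1]}$ is not argued in the Corollary at all but quoted from the references later in the proof of Lemma~\ref{gen}. Your inductive transitivity argument is a clean self-contained substitute for that citation.
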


Let us denote by $G_k$ the subgroup of $Aut X^{[k]}$ such that $G_k  \simeq  Syl_2  A_{2^k}$
and $W_{k-1}$ the subgroup of $G_k$ that has active states only on $ X^{k-1}$.

\begin{prop} \label{ordW}

An order of ${{W}_{k-1}}$ is equal to ${{2}^{{{2}^{k-1}}-1}},\,\,k > 1$ its structure is $W_{k-1} \simeq (C_2)^{{{2}^{k-1}}-1}$.
\end{prop}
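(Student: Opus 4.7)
The plan is to identify $W_{k-1}$ explicitly as the even-weight subgroup of $(C_2)^{2^{k-1}}$ and then count. First I would note that any automorphism of $X^{[k]}$ whose only non-trivial states sit on $X^{k-1}$ is determined by the binary tuple $(i_{k-1,1},\ldots,i_{k-1,2^{k-1}})$, where $i_{k-1,j}\in\{0,1\}$ records whether the state at $v_{k-1,j}$ is non-trivial. The $2^{k-1}$ single-vertex swaps at level $k-1$ permute pairwise disjoint pairs of leaves of $X^k$, so they commute and each has order~$2$. Hence the full collection of such automorphisms of $X^{[k]}$ is an elementary abelian group isomorphic to $(C_2)^{2^{k-1}}$, and $W_{k-1}$ embeds as a subgroup of it.

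Second, a single non-trivial state at $v_{k-1,i}$ acts on the leaves of $X^k$ as one transposition, which is odd. An automorphism supported on level $k-1$ therefore yields an even permutation of the leaves iff the Hamming weight of its tuple is even. Since $W_{k-1}\subseteq G_k\simeq Syl_2\,A_{2^k}$, every element of $W_{k-1}$ must be even, so $W_{k-1}$ sits inside the kernel $E$ of the parity homomorphism $(C_2)^{2^{k-1}}\to C_2$. This kernel has index $2$, which immediately yields $|E|=2^{2^{k-1}-1}$ and $E\simeq(C_2)^{2^{k-1}-1}$.

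For the reverse inclusion $E\subseteq W_{k-1}$, I would check that every even-weight tuple is realised inside $G_k$. Any such tuple is a product of elements $\tau_{i,j}$ supported at exactly two vertices $v_{k-1,i}$ and $v_{k-1,j}$; each such element is a product of two disjoint leaf transpositions, hence an even permutation of order~$2$. One then invokes the construction of $G_k$ as a $2$-Sylow subgroup of $A_{2^k}$ to conclude that all such double swaps already lie in $G_k$, so that $W_{k-1}=E$.

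The main obstacle I anticipate is precisely this reverse inclusion: one has to argue not merely that the double-swap elements live in $A_{2^k}$, but that they actually sit inside the particular Sylow subgroup $G_k$ fixed above. Once this point is secured, the remainder is routine linear algebra over the field with two elements, and the stated order $2^{2^{k-1}-1}$ together with the elementary abelian structure $(C_2)^{2^{k-1}-1}$ follow immediately from the fact that a codimension-one subspace of $(C_2)^n$ is elementary abelian of rank $n-1$.
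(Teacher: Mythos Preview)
Your proposal is correct and follows essentially the same route as the paper: identify the full group of level-$(k-1)$ states with $(C_2)^{2^{k-1}}$, observe that evenness on the leaves is exactly the even-Hamming-weight condition, and conclude that $W_{k-1}$ is the index-$2$ kernel. The obstacle you flag about the reverse inclusion dissolves once one reads the paper's working definition of $G_k$ (cf.\ Theorem~1) as the set of \emph{all} automorphisms of $X^{[k]}$ acting by even permutations on $X^k$; with that reading every even-weight tuple is in $G_k$ by definition, so $W_{k-1}=E$ automatically, and the paper's proof simply takes this for granted rather than isolating it as a separate step.
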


  \begin{proof}
On ${{X}^{k-1}}$ we have ${{2}^{k-1}}$ vertexes where can be group ${{V}_{k-1}}\simeq {{C}_{2}}\times {{C}_{2}}\times ...\times {{C}_{2}}\simeq {{({{C}_{2}})}^{k-1}}$, but as a result of the fact that ${X}^{k-1}$ contains only even number of active states or active vertex permutations (v.p.) in vertexes of $X^{k-1}$, there are only half of all permutations from ${{V}_{k-1}}$ on $X^{k-1}$. So it's subgroup of ${{V}_{k-1}}$: ${{W}_{k-1}}\simeq {}^{C_{2}^{{{2}^{k-1}}}}/{}_{{{C}_{2}}}$. So we can state: $|{{W}_{k-1}}|={2^{k-1}-1}$, $W_{k-1}$ has $k-1$ generators and we can consider ${W}_{k-1}$ as vector space of dimension $k-1$.
 \end{proof}

For example let's consider subgroup $W_{4-1}$ of $A_{2^4}$ its order is $2^{2^{4-1}-1}=2^7$ and $|A_{2^4}|=2^{14}$.

 \begin{lemma}\label{gen} 
 The elements $\tau $ and ${{\alpha }_{0}},...,{{\alpha }_{k-1}}$ generates arbitrary element ${{\tau }_{ij}}$.
\end{lemma}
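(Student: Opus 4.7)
The plan is to express every $\tau_{i,\ldots,j}$ as a product of single-state automorphisms $\tau_{k-1,m}$ (whose only non-trivial vertex permutation sits at $v_{k-1,m}$), and to show each such $\tau_{k-1,m}$ lies in $\langle\tau,\alpha_0,\ldots,\alpha_{k-1}\rangle$. The crucial observation is that $\alpha_{k-1}=\beta_{k-1,(1,0,\ldots,0)}$ is already $\tau_{k-1,1}$, so the problem reduces to transporting that single active state to every other vertex of level $k-1$ by conjugation.

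For the transport step I would first establish that $B_{k-1}=\langle\alpha_0,\ldots,\alpha_{k-2}\rangle$ acts transitively on the $2^{k-1}$ vertices of $X^{k-1}$. This follows from Corollary \ref{B_k-1}: $B_{k-1}\simeq C_2\wr\cdots\wr C_2$ ($k-1$ factors) embeds as the standard Sylow $2$-subgroup of $S_{2^{k-1}}$ in its action on $X^{k-1}$, which is transitive on $2^{k-1}$ points. Concretely, given any $m$, the element $g_m\in B_{k-1}$ sending $v_{k-1,1}$ to $v_{k-1,m}$ can be built recursively by applying, at each level $l$ where the two root-paths to $v_{k-1,1}$ and $v_{k-1,m}$ diverge, a suitable $B_{k-1}$-conjugate of $\alpha_l$ that swaps the corresponding pair of sibling subtrees.

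Once $g_m$ is at hand, I would argue $g_m\alpha_{k-1}g_m^{-1}=\tau_{k-1,m}$ by unwinding the definition: $\alpha_{k-1}$ fixes every vertex of $X^{[k]}$ except the two children of $v_{k-1,1}$, which it swaps; since $g_m$ is a tree automorphism sending children of $v_{k-1,1}$ to children of $v_{k-1,m}$, the conjugate $g_m\alpha_{k-1}g_m^{-1}$ fixes every vertex except the two children of $v_{k-1,m}$, which it swaps --- exactly the description of $\tau_{k-1,m}$. With the full collection $\{\tau_{k-1,m}\}_{m=1}^{2^{k-1}}$ in place, these are commuting involutions (their portraits have pairwise disjoint supports on the unique level where they are active), so
\[
\tau_{i,\ldots,j}=\tau_{k-1,i}\cdot\tau_{k-1,i+1}\cdots\tau_{k-1,j},
\]
and more generally any automorphism with active states on a prescribed subset of $X^{k-1}$ is the corresponding product; in particular the stated $\tau_{ij}$'s are all covered.

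The main obstacle I foresee is keeping the conjugation bookkeeping in the wreath product honest: one must verify that when $g_m$ carries its own active states above level $k-1$, these do not survive in the conjugate $g_m\alpha_{k-1}g_m^{-1}$, i.e., that the contributions of $g_m$ and $g_m^{-1}$ to the portrait at each higher level cancel and only the transported bottom-level state remains. This amounts to a direct computation inside $C_2\wr\cdots\wr C_2$ but deserves to be carried out explicitly, most naturally by induction on the depth of the truncated tree, after which the rest of the argument is routine.
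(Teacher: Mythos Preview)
Your argument is internally sound, but it hinges on reading the generator list literally as containing $\alpha_{k-1}=\beta_{k-1,(1,0,\ldots,0)}$, the single active state at $v_{k-1,1}$. In the paper that ``$\alpha_{k-1}$'' is a misprint: the proof itself opens with ``the set $\alpha_0,\ldots,\alpha_{k-2}$ is a minimal system of generators of $Aut X^{[k-1]}$'' and closes with ``$\langle\alpha_0,\ldots,\alpha_{k-2},\tau\rangle\simeq G_k$''; the same set appears in Lemma~\ref{ordG_k} and Theorem~\ref{isomor}. The point of the lemma is precisely that the \emph{even} generators $\alpha_0,\ldots,\alpha_{k-2},\tau$ already produce every $\tau_{ij}$. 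Your element $\alpha_{k-1}$ is a single transposition on $X^k$, hence odd, so it is not in $G_k$ at all; if it were allowed, $\tau$ would be redundant (it is a product of two $B_{k-1}$-conjugates of $\alpha_{k-1}$), which contradicts the minimality result the paper is building towards.

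Once $\alpha_{k-1}$ is removed, your strategy breaks at the very first step: the ``single-state'' automorphisms $\tau_{k-1,m}$ are odd, so none of them lie in $\langle\alpha_0,\ldots,\alpha_{k-2},\tau\rangle\le G_k$, and you cannot decompose $\tau_{i,\ldots,j}$ as a product of them inside the group. The paper's route avoids this by never separating the two active states of $\tau$: it conjugates $\tau$ by elements of $\langle\alpha_1,\ldots,\alpha_{k-2}\rangle$, which act only on the left subtree $v_{1,1}X^{[k-1]}$ and hence fix the state at $v_{k-1,2^{k-1}}$, to obtain all $\tau_{j,2^{k-1}}$ with $j\le 2^{k-2}$; then $\tau_{i,2^{k-1}}\cdot\tau_{j,2^{k-1}}=\tau_{i,j}$ gives every pair in the left half, $\alpha_0$-conjugation transports these to the right half, and a final product $\tau_{1,i}\,\tau\,\tau_{m,2^{k-1}}$ yields the pairs straddling both halves. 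Your transitivity and conjugation bookkeeping are exactly the right ingredients, but they must be applied to the two-state element $\tau$ rather than to a non-existent one-state generator.
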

 \begin{proof} According to [5, 6] 
  the set  ${{\alpha }_{0}},...,{{\alpha }_{k-2}}$ is minimal system of generators of group $Aut{{X}^{[k-1]}}$. Firstly, we shall proof the possibility of generating an arbitrary $\tau_{ij}$, from set $v_{(k-1,i)}$, $1\leq i \leq 2^{k-2} $. 
Since $Aut v_{1,1}X^{[k-2]} \simeq \left\langle {{\alpha }_{1}},...,{{\alpha }_{k-2}} \right\rangle $ acts at ${{X}^{k-1}}$ transitively it follows an ability to permute vertex with a transposition from automorphism $\tau $ and stands in $v_{k-1,1}$ in arbitrary vertex ${{v}_{k-1,j}},\,\,\,j\le {{2}^{k-2}}$ of $v_{1,1}X^{[k-1]}$, for this goal we act by $\alpha_{k-j}$ at $\tau $: $\alpha_{k-j} \tau \alpha_{k-j} ={{\tau }_{j, 2^{k-2}}}$. Similarly we act at $\tau$ by corespondent $\alpha_{k-i}$ to get $\tau_{i, 2^{k-2}}$ from $\tau $: $\alpha_{k-i}  \tau \alpha_{k-i}^{-1}={{\tau }_{i, 2^{k-2}}}$. Note that automorphisms $\alpha_{k-j}$ and $\alpha_{k-i}, 1<i,j<k-1$ acts only at subtree $v_{1,1}X^{[k-1]}$ that's why they fix v.p. in $v_{k-1, 2^{k-1}}$. Now we see that ${\tau }_{i, 2^{k-2}}{{\tau }_{j, 2^{k-2}}}={{\tau }_{i, j}}$, where $1 \leq i,j < 2^{k-2}$.
To get ${\tau }_{m, l}$ from $v_{1,2}X^{[k-1]}$, i.e. $2^{k-2} < m,l \leq 2^{k-1} $ we use $\alpha_0$ to map ${\tau }_{i, j}$ in ${\tau }_{i+2^{k-2}, j+2^{k-2}}\in v_{1,2} AutX^{[k-1]}$. To construct arbitrary transposition ${\tau }_{i,m}$ from $W_k$ we have to multiply ${\tau }_{1,i} {\tau } {\tau }_{m,2^{k-1}}={\tau }_{i,m}$.
Lets realize natural number of $v_{k,l}$, $1<l<2^k$ in 2-adic system of presentation (binary arithmetic).  Then $l={\delta_{{{l}_{1}}}}{{2}^{m_l}}+{\delta_{{{l}_{2}}}}{{2}^{m_l-1}}+...+{\delta_{{{l}_{m_l+1}}}},\,\, \delta_{l_i} \in \{0,1\}$ where is a correspondence between  ${\delta_{{{l}_{i}}}}$ that from such presentation and expressing of automorphisms: $\tau_{l,2^{k-1}} = \prod_{i=1}^{m_l} \alpha_{k-2-(m_{l}-i)}^{\delta_{{l}_{i}}} \tau  \prod_{i=1}^{m_l} \alpha_{k-2-(m_{l}-i)}^{\delta_{l_i}},  1 \leq m_l \leq k-2$.  In other words $\left\langle {{\alpha }_{0}},...,{{\alpha }_{k-2}},\tau  \right\rangle \simeq {{G}_{k}}$.
 \end{proof}

\begin{corollary}\label{genG_k-1} The elements from conditions of Lemma \ref{gen} is enough to generate basis of $W_{k-1}$.
\end{corollary}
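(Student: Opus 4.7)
The plan is to exhibit an explicit subset of $W_{k-1}$ that is both a basis of $W_{k-1}$ as an $\mathbb{F}_2$-vector space and constructible from the generators listed in Lemma~\ref{gen}. By Proposition~\ref{ordW}, $W_{k-1}$ is elementary abelian of order $2^{2^{k-1}-1}$, and, identifying each of its elements with its state pattern on the vertices $v_{k-1,1},\dots,v_{k-1,2^{k-1}}$, the group is precisely the even-weight subspace of $\mathbb{F}_2^{2^{k-1}}$; its $\mathbb{F}_2$-dimension is therefore $2^{k-1}-1$.

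The candidate basis is $\mathcal{B}=\{\tau_{1,j}:2\leq j\leq 2^{k-1}\}$, which has $|\mathcal{B}|=2^{k-1}-1$ elements, matching the required dimension. For linear independence, each $\tau_{1,j}$ has support $\{1,j\}$, so a nontrivial $\mathbb{F}_2$-combination $\sum_{t=1}^{r}\tau_{1,j_t}$ has support equal to the symmetric difference $\{1\}\,\triangle\,\{j_1,\dots,j_r\}$; since the indices $j_t$ are distinct and all differ from $1$, this symmetric difference is nonempty. For spanning, any even-weight subset $S\subseteq\{1,\dots,2^{k-1}\}$ is the support of $\sum_{i\in S}\tau_{1,i}$ when $1\notin S$, or of $\sum_{i\in S\setminus\{1\}}\tau_{1,i}$ when $1\in S$, and in both cases the parities at position $1$ and at each $i\in S$ work out correctly.

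It then remains to observe that every element of $\mathcal{B}$ appears in the construction of Lemma~\ref{gen}: setting the first index equal to $1$ in the explicit formula of that lemma produces each $\tau_{1,j}$ as a word in $\tau,\alpha_0,\dots,\alpha_{k-2}$. The only mildly delicate point, and the main thing I would check carefully, is that the construction reaches indices $j$ lying in the right subtree $v_{1,2}X^{[k-1]}$, i.e.\ $j>2^{k-2}$; this is handled by conjugation with $\alpha_0$ to transport a transposition from the left half of $X^{k-1}$ to the right half, exactly as in the final step of the lemma's proof. Once this is verified, the subset $\mathcal{B}$ is both produced by the generators of Lemma~\ref{gen} and a basis of $W_{k-1}$, which is exactly what the corollary asserts.
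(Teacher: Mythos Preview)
Your argument is correct and matches what the paper leaves implicit: the corollary is stated without proof, as an immediate consequence of Lemma~\ref{gen} (which already produces every $\tau_{ij}$) together with Proposition~\ref{ordW}. Your explicit basis $\{\tau_{1,j}:2\le j\le 2^{k-1}\}$ is a valid choice; the paper later, in the proof of Statement~\ref{comm}, exhibits the alternative basis $\{\tau_{12},\tau_{23},\dots,\tau_{2^{k-1}-1,2^{k-1}}\}$, but either works equally well. One small slip worth fixing: the support of $\sum_{t=1}^{r}\tau_{1,j_t}$ is $\{j_1,\dots,j_r\}$ when $r$ is even and $\{1,j_1,\dots,j_r\}$ when $r$ is odd, not literally $\{1\}\,\triangle\,\{j_1,\dots,j_r\}$; your conclusion of nonemptiness is unaffected.
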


 \begin{theorem} A maximal 2-subgroup $G_k$ in $Aut{{X}^{\left[ k \right]}}$, that acts by even permutation on ${{X}^{k}}$ has structure of inner semidirect product $B_{k-1} \ltimes {{W}_{k-1}}$.
\end{theorem}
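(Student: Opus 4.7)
The plan is to verify the three standard conditions that define an inner semidirect product $G_k = B_{k-1} \ltimes W_{k-1}$: that both $B_{k-1}$ and $W_{k-1}$ are subgroups of $G_k$, that $W_{k-1}$ is normal in $G_k$, and that $B_{k-1} \cap W_{k-1} = \{e\}$ together with $B_{k-1} W_{k-1} = G_k$.

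First I would handle the containments and the trivial intersection. By Lemma~\ref{even}, every non-trivial state on a level $X^l$ with $l < k-1$ induces an even permutation of $X^k$, so $B_{k-1} \leq G_k$. By Proposition~\ref{ordW}, elements of $W_{k-1}$ have active states only on $X^{k-1}$ and in an even number of vertices, so they act on $X^k$ as products of an even number of disjoint transpositions and hence as even permutations; therefore $W_{k-1} \leq G_k$. Triviality of $B_{k-1} \cap W_{k-1}$ is immediate from disjoint supports: an element of $B_{k-1}$ has trivial state on level $k-1$, whereas an element of $W_{k-1}$ has trivial state on every level other than $k-1$.

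Next I would establish normality of $W_{k-1}$ in $G_k$ (in fact in all of $Aut X^{[k]}$). Any $g \in G_k$ permutes the $2^{k-1}$ vertices of $X^{k-1}$; so if $w \in W_{k-1}$ has active set $S \subseteq X^{k-1}$, then $g w g^{-1}$ has active set $g(S)$. Since $|g(S)| = |S|$ is even, $g w g^{-1} \in W_{k-1}$.

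For the last step I would extract the generators of $G_k$ supplied by Lemma~\ref{gen}: $G_k = \langle \alpha_0, \ldots, \alpha_{k-2}, \tau \rangle$. The first $k-1$ generators lie in $B_{k-1}$ by Corollary~\ref{B_k-1}, and $\tau = \tau_{1,2^{k-1}}$ has $2^{k-1}$ active vertices on $X^{k-1}$ — an even number for $k \geq 2$ — so $\tau \in W_{k-1}$. Hence $\langle B_{k-1}, W_{k-1}\rangle = G_k$, and the normality established in the previous step promotes this to the set equality $B_{k-1} W_{k-1} = G_k$. The main obstacle I anticipate is the order bookkeeping that rules out any collapse in this product: Corollary~\ref{B_k-1} and Proposition~\ref{ordW} give $|B_{k-1}| = |W_{k-1}| = 2^{2^{k-1}-1}$, while Legendre's formula yields $v_2(|A_{2^k}|) = 2^k - 2 = 2(2^{k-1}-1)$, so $|G_k| = |B_{k-1}|\cdot|W_{k-1}|$, which together with the trivial intersection confirms that the semidirect product exhausts all of $Syl_2 A_{2^k}$.
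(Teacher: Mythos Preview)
Your argument is correct and follows essentially the same route as the paper: both establish that $W_{k-1}$ is normal (via conjugation preserving the level and the parity of the active set), that $B_{k-1}\cap W_{k-1}$ is trivial, and then match orders $|B_{k-1}|\cdot|W_{k-1}| = 2^{2^{k-1}-1}\cdot 2^{2^{k-1}-1} = 2^{2^k-2} = |G_k|$; your presentation is simply a more systematic packaging of the same ingredients. One small misreading: in the paper $\tau=\tau_{1,2^{k-1}}$ has exactly \emph{two} active vertices on $X^{k-1}$ (at positions $1$ and $2^{k-1}$), not $2^{k-1}$ of them --- but since $2$ is even this does not affect your conclusion that $\tau\in W_{k-1}$.
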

\begin{proof}
A maximal 2-subgroup of  $Aut{{X}^{\left[ k-1 \right]}}$ is isomorphic to ${{B}_{k-1}}\simeq \underbrace{{{S}_{2}}\wr {{S}_{2}}\wr ...\wr {{S}_{2}}}_{k-1}$ (this group acts on ${{X}^{k-1}}$). A maximal 2-subgroup which has elements with active states only on ${{X}^{k-1}}$ corresponds subgroup ${W}_{k-1}$.
Since subgroups ${B}_{k-1}$ and ${{W}_{k-1}}$ are embedded in $Aut{{X}^{\left[ k \right]}}$, then define an action of ${{B}_{k-1}}$ on elements of ${{W}_{k-1}}$ as ${{\tau }^{\sigma }}=\sigma \tau {{\sigma }^{-1}},\,\,\,\sigma \in {{B}_{k-1}},\,\,\tau \in {{W}_{k-1}}$,
i.e. action of inner automorphism (inner action) from $Aut{{X}^{\left[ k \right]}}$.
Note that ${{W}_{k-1}}$ is subgroup of stabilizer of  ${{X}^{k-1}}$ i.e. ${{W}_{k-1}}<St_{Aut{X}^{[k]}}(k-1)\lhd AutX^{[k]}$ and is normal too $W_{k-1}\lhd AutX^{[k]}$, because conjugation keeps a cyclic structure of permutation so even permutation maps in even. Therefore such conjugation induce automorphism of ${W}_{k-1}$ and $G_k \simeq B_{k-1}\ltimes W_{k-1}$.

 Let us show it in other way with using another technique of permutations, for this goal to every $\sigma \in B_{k-1}$ ($\sigma(j)=i_j$, $1 \leq j \leq 2^{k-1}$), which acts on ${X}^{k-1}$ (${X}^{k-1}$ has $2^{k-1}$ elements) set in correspondence a permutation
$$
\beta_\sigma=\left(\begin{array}{rrrrrr}
  1, & 2, & 3, & 4, & ..., & 2^{k} \\
  2{{i}_{1}}-1, & 2{{i}_{1}}, & 2{i}_{2}-1, & 2{i}_{2}, & ..., & 2{i}_{2^{k-1}}
\end{array}  \right), i_j\in{1,...,2^{k-1}}.
$$
  Note that
$\beta_{\sigma }$ acts on ${{2}^{k}}$ elements as elements of ${{W}_{k-1}}$ so we can multiply  it.  I.e. for element
${{\beta }_{\sigma }}(2i-1)=2\sigma (i)-1,\,\,{{\beta }_{\sigma }}(2i)=2\sigma (i)$, ${{\sigma }_{i}}\in \left\{ {{1,2,...,2}^{k-1}} \right\}$.
Let us prove, that it is action of group really element $e$, $e\in{{B}_{k-1}}$ acts trivial.
This operation is associative, since ${{\tau }^{\sigma \phi }}={{({{h}^{\sigma }})}^{\phi }}=\phi \sigma \tau {{\sigma }^{-1}}{{\phi }^{-1}},\,\,\sigma ,\phi \in {{B}_{k-1}}$.

 Let us  check that it is homomorphism. Let us consider mapping ${{({{\tau }_{2k-1,2k}})}^{\sigma }}\mapsto {{\tau }_{2n-1,2n}}$, where ${{\tau }_{2n-1,2n}}$  is even transformation, so we completeness set of elements from ${{W}_{k-1}}$ under last mapping. Since it is homomorphous mapping in a group $Aut{{W}_{k-1}}$ from ${{B}_{k-1}}$.  It's semidirect product ${{B}_{k-1}}\ltimes {{W}_{k-1}}\simeq ( \underbrace{{{S}_{2}}\wr {{S}_{2}}\wr ...\wr {{S}_{2}}}_{k-1}) \ltimes (\underbrace{{{C}_{2}}\times {{C}_{2}}\times ...\times {{C}_{2}}}_{{{2}^{k-1}}-1}) $.
Order of ${{W}_{k-1}}$ is equal ${{2}^{{{2}^{k-1}}-1}}$ because at ${{X}^{k-1}}$ is ${{2}^{{{2}^{k-1}}}}$ vertexes but only half  of combinations of active states from  ${{X}^{k-1}}$ can form even permutation thus $\left| {{W}_{k-1}} \right|={{2}^{{{2}^{k-1}}-1}}$.
Since  at ${{X}^{k-1}}$ is ${{2}^{{{2}^{k-1}}}}$ vertexes and half of combinations of active states from  ${X}^{k-1}$ can form even permutation thus $\left| {{W}_{k-1}} \right|={{2}^{{{2}^{k-1}}-1}}$ that is proved in Proposition \ref{ordW}. Using the Corollary \ref{B_k-1} about ${{B}_{k-1}}$ order of  ${{G}_{k}}\simeq {{B}_{k-1}} \ltimes {{W}_{k-1}}$ is ${{2}^{{{2}^{k-1}}-1}}\cdot {{2}^{{{2}^{k-1}}-1}}={{2}^{{{2}^{k}}-2}}$.
\end{proof}


 \begin{lemma}\label{ordG_k} Orders of groups $G_k = \langle   \mathop{\alpha}_{0}, \mathop{\alpha}_{1},
 \mathop{\alpha}_{2},...,\mathop{\alpha}_{k-2}, \mathop{\tau} \rangle $
and $Syl_2(A_{2^{k}})$ are equal to $2^{2^{k}-2}$.
\end{lemma}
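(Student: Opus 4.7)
The plan is straightforward because the heavy lifting has already been done in the preceding theorem and its supporting results. I would derive $|G_k|$ directly from the semidirect product decomposition $G_k \simeq B_{k-1} \ltimes W_{k-1}$, then compute $|Syl_2(A_{2^k})|$ by the $2$-adic valuation of $(2^k)!$, and finally observe that $G_k$ is a $2$-subgroup of $A_{2^k}$ of the same order as a Sylow $2$-subgroup, hence must be one.

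For the first computation, combining Corollary~\ref{B_k-1} with Proposition~\ref{ordW} gives
$$|G_k| \;=\; |B_{k-1}| \cdot |W_{k-1}| \;=\; 2^{2^{k-1}-1} \cdot 2^{2^{k-1}-1} \;=\; 2^{2^k - 2}.$$
For the second, by Legendre's formula
$$v_2\bigl((2^k)!\bigr) \;=\; \sum_{i=1}^{k} 2^{k-i} \;=\; 2^k - 1,$$
and since $|A_{2^k}| = (2^k)!/2$, we get $v_2(|A_{2^k}|) = 2^k - 2$, so $|Syl_2(A_{2^k})| = 2^{2^k - 2}$.

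To complete the identification it remains to verify $G_k \leq A_{2^k}$. The generators $\alpha_0, \ldots, \alpha_{k-2}$ have active states on levels $0, \ldots, k-2$, all strictly below $k-1$, so by Lemma~\ref{even} each acts as an even permutation on $X^k$; the generator $\tau$ acts as $2^{k-1}$ disjoint transpositions on the leaves of $X^k$, which is even for $k \geq 2$. Hence $G_k$ is a $2$-subgroup of $A_{2^k}$ whose order matches that of a Sylow $2$-subgroup, and so $G_k$ is itself a Sylow $2$-subgroup. Lemma~\ref{gen} already identifies $\langle \alpha_0,\ldots,\alpha_{k-2},\tau\rangle$ with $G_k$, closing the circle.

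I do not anticipate a genuine obstacle: the structure theorem and the orders of $B_{k-1}$ and $W_{k-1}$ are in hand, so the only new work is the elementary valuation $v_2((2^k)!) = 2^k - 1$. The one place that needs a moment of care is the parity of $\tau$, which uses $2^{k-1}$ being even, i.e.\ $k \geq 2$; for $k = 1$ the statement degenerates and the generating list is empty, so the interesting range is exactly $k \geq 2$.
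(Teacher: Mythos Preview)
Your approach matches the paper's: both compute $|Syl_2(A_{2^k})|$ via Legendre's formula and $|G_k|$ via the semidirect product decomposition $G_k \simeq B_{k-1}\ltimes W_{k-1}$ together with Corollary~\ref{B_k-1} and Proposition~\ref{ordW}. One minor slip: by the paper's definition $\tau = \tau_{1,2^{k-1}}$ has exactly two active states on level $k-1$, so it acts as a product of two (not $2^{k-1}$) disjoint transpositions on $X^k$ --- still even, so your conclusion stands, though that parity check and the identification of $G_k$ as a Sylow subgroup go beyond this lemma and are in fact the content of Theorem~\ref{isomor}.
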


In accordance with Legender's formula, the power of 2 in ${{2}^{k}}!$ is $\left[ \frac{{{2}^{k}}}{2} \right]+\left[ \frac{{{2}^{k}}}{{{2}^{2}}} \right]+\left[ \frac{{{2}^{k}}}{{{2}^{3}}} \right]+...+\left[ \frac{{{2}^{k}}}{{{2}^{k}}} \right]=\frac{{{2}^{k}}-1}{2-1}$. We need to subtract 1 from it because we have only $\frac{n!} {2}$ of all permutations as a result: $\frac{{{2}^{k}}-1}{2-1}-1=2^{k}-2$. So $\left| Syl({{A}_{{{2}^{k}}}}) \right|={{2}^{{{2}^{k}}-2}}$.
The same order has group $G_k=B_{k-1} \ltimes W_{k-1}$: $|G_k|=|B_{k-1}|\cdot|W_{k-1}|= |Syl_2 A_{2^k}|$, since order of groups $G_{k}$ according to Proposition \ref{ordW} and the fact that $|B_{k-1}|=2^{2^{k-1}-1}$ is $2^{2^{k}-2}$.
For instance the orders of $Syl_2 (A_8)$, $B_{3-1}$ and $W_{3-1}$: $|W_{3-1}|= 2^{2^{3-1}-1}=2^3=8$, $|B_{3-1}|=|C_2\wr C_2| = 2 \cdot 2^2=2^3$ and according to Legendre's formula, the power of 2 in ${{2}^{k}}!$ is $\frac{{2}^{3}}{2}+ \frac{{2}^{3}}{2^2}+\frac{{2}^{3}}{2^3} -1=6$ so $Syl_2 (A_8) = 2^6=2^{2^k-2}$, where $k=3$. Next example for $A_{16}$: $Syl_2 (A_{16}) =2^{2^4-2}= 2^{14}, k=4$, $|W_{4-1}|= 2^{2^{4-1}-1}=2^7$, $|B_{4-1}|=|C_2\wr C_2\wr C_2| = 2 \cdot 2^2\cdot 2^4 = 2^7$. So we have equality $2^7 2^7 = 2^{14}$ which endorse the formula.

 \begin{theorem} \label{isomor}  
  The set $S_{\mathop{\alpha}}= \{\mathop{\alpha}_{0}, \mathop{\alpha}_{1},
 \mathop{\alpha}_{2},  ... ,\mathop{\alpha}_{k-2}, \mathop{\tau}\}$
   of elements from subgroup of $AutX^{[k]}$
   generates a group $G_k$ which isomorphic to $Syl_2(A_{2^{k}})$.
\end{theorem}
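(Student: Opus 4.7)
The plan is to promote the structural and cardinality results already proved in this section to the desired isomorphism via Sylow's theorems, in three steps: (a) show $G_k \le A_{2^k}$; (b) invoke the order computation to conclude $G_k$ has exactly the 2-part of $|A_{2^k}|$; (c) finish by Sylow's conjugacy theorem.

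For step (a), Lemma \ref{even} guarantees that each $\alpha_i$ with $0 \le i \le k-2$ acts as an even permutation on the $2^k$ leaves, since its unique active state lies at level $i < k-1$. The remaining generator $\tau = \tau_{1,2^{k-1}}$ has its active vertex permutations concentrated at two vertices of level $k-1$, so on the leaves it is a product of two disjoint transpositions and is therefore also even. Hence every element of $G_k = \langle S_\alpha\rangle$ is an even permutation of $X^k$, i.e.\ $G_k \le A_{2^k}$.

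For step (b), observe that $G_k$ is a subgroup of the finite 2-group $Aut X^{[k]}$ (of order $2^{2^k-1}$), hence itself a 2-group. Lemma \ref{ordG_k} records $|G_k| = 2^{2^k-2}$, which by the Legendre-formula computation following that lemma coincides with the full 2-part of $|A_{2^k}|$. For step (c), any 2-subgroup of a finite group whose order equals the 2-part of the whole group's order is by definition a Sylow 2-subgroup; since all Sylow 2-subgroups of $A_{2^k}$ are conjugate and therefore pairwise isomorphic, we obtain $G_k \simeq Syl_2(A_{2^k})$.

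The main obstacle has in fact already been absorbed into the preceding work: Lemma \ref{gen} is what guarantees that $S_\alpha$ generates the entire semidirect product $B_{k-1} \ltimes W_{k-1}$, so that the order formula of Lemma \ref{ordG_k} applies to $\langle S_\alpha\rangle$ and not merely to some proper subgroup. Were minimal generation to fail, the Sylow count would only bound $|G_k|$ from above and leave open whether maximal 2-power order is attained. Granted those two ingredients, the proof of Theorem \ref{isomor} reduces to the routine observation that a $p$-subgroup of $A_{2^k}$ of maximal $p$-power order is necessarily a Sylow $p$-subgroup.
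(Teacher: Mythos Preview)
Your proof is correct and follows essentially the same route as the paper: invoke the earlier generation result (Lemma~\ref{gen} together with Corollaries~\ref{B_k-1} and~\ref{genG_k-1}) and the order count of Lemma~\ref{ordG_k}, then appeal to Sylow's theorems. Your step~(a) makes explicit something the paper leaves implicit, namely that each generator indeed acts as an even permutation on $X^k$, but otherwise the two arguments coincide.
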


 \begin{proof}
As we see from Corollary \ref{B_k-1}, Lemma \ref{gen} and Corollary \ref{genG_k-1} group $G_k$ are generated by $S_{\mathop{\alpha}}$ and their orders according to Lemma \ref{ordG_k} is equal. So according to Sylow's theorems 2-subgroup $G_k<A_{2^k}$ is $Syl_2 (A_{2^k})$.
 \end{proof}
 Consequently, we construct a generating system, which contains $k$ elements, that is less than in \cite{Iv}.

The structure of Sylow 2-subgroup of $A_{2^k}$ is the following: $\underset{i=1}{\overset{k-1}{\mathop{\wr }}}\,{{C}_{2}}  \ltimes \prod_{i=1}^{2^{k-1}-1} C_2  $, where we take $C_2$ as group of action on two elements and this action is faithful, it adjusts with construction of normalizer for $Syl_p(S_n)$ from \cite{Weisner}, where it was said that $Syl_2(A_{2^l})$ is self-normalized  in $S_{2^l}$.

Number of such minimal generating systems for ${{G}_{k}}$ is $C_{{{2}^{k-2}}}^{1}C^1_{{2}^{k-2}}=2^{k-2} \cdot 2^{k-2}= 2^{2k-4}$.

The structure of $Syl_2A_{12}$ is the same as of the subgroup $H_{12} < Syl_2(S_8) \times Syl_2(S_4)$, for which $[Syl_2(S_8) \times Syl_2(S_4):H_{12}]=2$. $|Syl_2(A_{12})|= 2^{[12/2] + [12/4]+ [12/8]-1} = 2^9$. Also $|Syl_2(S_8)|=2^7$, $|Syl_2(S_4)|=2^3$, so $|Syl_2(S_8) \times Syl_2(S_4)|=2^{10}$ and $|H_{12}|=2^9$, because its index in $Syl_2(S_8) \times Syl_2(S_4)$ is 2. The structure of $Syl_2(A_6)$ is the same as of $H_6 < Syl_2(S_4) \times (C_2)$. Here $H_6 = \{(g,h_g)|g \in Syl_2(S_4), h_g \in C_2\}$, where

\begin{equation}\label{H}
 \begin{cases}
h_g = e, \ \ if \ g|_{L_2} \in Syl_2(A_6), \\
h_g = (5,6), \ if \, g|_{L_2} \in {Syl_2(S_6) \setminus Syl_2(A_6)},
 \end{cases}
\end{equation}

The structure of $Syl_2(A_{6})$ is the same as subgroup $H_6:$ $H_6 < Syl_2(S_4) \times (C_2)$ where $H_6= \{ (g, h) | g\in Syl_2(S_4), h \in  AutX \}$. So last bijection determined by (\ref{H}) giving us $Syl_2 A_{6} \simeq Syl_2 S_{4} $. As a corollary we have $Syl_2 A_{{2^k}+2} \simeq Syl_2 S_{2^k} $.
The structure of $Syl_2(A_{7})$ is the same as of the subgroup $H_7:$ $H_7 < Syl_2(S_4) \times S_2$ where $H_6= \{ (g, h) | g\in Syl_2(S_4), h \in  S_2 \}$ and $h$ depends of $g$:

\begin{equation}\label{HH}
 \begin{cases}
h_g = e, \ \ if \  g|_{L_2}\in  Syl_2  A_7, \\
h_g = (i,j), i,j \in\{ 5,6,7 \},  \ if \, g|_{L_2}\in  {Syl_2 S_7\setminus Syl_2A_7},
 \end{cases}
\end{equation}
The generators of the group $H_7$ have the form $(g,h), \, \, g\in Syl_2(S_4), \, h\in C_2$, namely: $ \{ {\beta_{0}; \beta_{1}, \tau} \} \cup \{ (5,6) \}$. An element $h_g$ can't be a product of two transpositions of the set: ${(i,j), (j,k), (i,k)}$, where $i,j,k$  $\in\{ 5,6,7 \} $, because $(i,j)(j,k)=(i,k,j)$ but $ord(i,k,j) =3$, so such element doesn't belong to 2-subgroup. In general elements of $Syl_2 A_{4k+3}$ have the structure (\ref{HH}), where $h_g = (i,j), \,\, i,j \in\{ 4k+1, 4k+2, 4k+3 \}$ and $g\in Syl_2 S_{4k}$.

Also $|Syl_2(S_8)|=2^7$, $|Syl_2(S_4)|=2^3$, so $|Syl_2(S_8) \times Syl_2(S_4)|=2^{10}$ and $|H_{12}|=2^9$, because its index in $Syl_2(S_8) \times Syl_2(S_4)$ is 2. The structure of $Syl_2(A_6)$ is the same as of $H_6 < Syl_2(S_4) \times (C_2)$. Here $H_6 = \{(g,h_g)|g \in Syl_2(S_4), h_g \in C_2\}$.

The orders of this groups are equal, really $|Syl_2(A_7)|= 2^{[7/2] + [7/4]-1}  = 2^3= |H_7|$. In case \, $g|_{L_2}\in  {S_7\setminus A_7}$ we have $C_3^2$ ways to construct one transposition that is direct factor in $H$ which complete $Syl_2 S_4$ to $H_7$ by one transposition  $: \{(5,6); (6,7); (5,7) \}$.

The structure of $Syl_2(A_{2^k+2^l})$ $(k>l)$ is the same as of the subgroup $H_{2^k+2^l} < Syl_2(S_{2^k}) \times Syl_2(S_{2^l})$, for which $[Syl_2(S_{2^k}) \times Syl_2(S_{2^l}):H]=2$. $|Syl_2(A_{2^k+2^l})|= 2^{[(2^k+2^l)!/2] + [(2^k+2^l)!/4]+ .... -1} $.
Here $H = \{(g,h_g)|g \in Syl_2(S_2^k), h_g \in Syl_2(S_2^l\}$, where

\begin{equation}\label{HHH}
 \begin{cases}
h  \in    A_{2^l}, \ \ if \  g|_{X^{k-1}}\in    A_{2^k}, \\
h:  h|_{X^2}\in  {Syl_2 (S_{2^l}) \setminus Syl_2 (A_{2^l})},  \ if \, g|_{X^k}\in  {Syl_2 (S_{2^k}) \setminus Syl_2 (A_{2^k})},
 \end{cases}
\end{equation}
The generators of the group $H_7$ have the form $(g,h), \, \, g\in Syl_2(S_4), \, h\in C_2$, namely: ${\beta_{0}; \beta_{1}, \tau} \cup {(5,6)}$.

I.e. for element  ${{\beta }_{\sigma }}(2i-1)=2\sigma (i)-1,\,\,{{\beta }_{\sigma }}(2i)=2\sigma (i)$, ${{\sigma }_{i}}\in \left\{ {{1,2,...,2}^{k-1}} \right\}$.

\begin{proper} Relation between structures of Sylow subgroups is given by $ Sy{{l}_{2}}({{A}_{4k+2}}) \simeq Sy{{l}_{2}}({S_{4k}})$, where $k\in \mathbb{N}$.
\end{proper}
\begin{proof}
Here $H_6 = \{(g,h_g)|g \in Syl_2(S_4), h_g \in C_2\}$, where

\begin{equation}\label{HHHH}
 \begin{cases}
h_g = e, \ \ if \ g|_{L_k} \in Syl_2(A_{4k+2}), \\
h_g = (4k+1,4k+2), \ if \, g|_{L_k} \in Syl_2(S_{4k+2}) \setminus Syl_2(A_{4k+2}),
 \end{cases}
\end{equation}

The structure of $Syl_2(A_{6})$ is the same as subgroup $H_6:$ $H_6 < Syl_2(S_4) \times (C_2)$ where $H_6= \{ (g, h) | g\in Syl_2(S_4), h \in  AutX \}$. So last bijection determined by (\ref{HHHH}) give us $Syl_2 A_{6} \simeq Syl_2 S_{4} $. As a corollary we have $Syl_2 A_{{2^k}+2} \simeq Syl_2 S_{2^k} $.
\end{proof}
\begin{proper} Relation between orders of Sylows subgroup for $n=4k-2$ and $n=4k$ is given by $\left| Sy{{l}_{2}}({{A}_{4k-2}}) \right|={{2}^{i}}\left| Sy{{l}_{2}}({{A}_{4k}}) \right|$, where value $i$ depend only of power of 2 in decomposition of prime number of $k$.
\end{proper}
\begin{proof}
Really $\left| {{A}_{4k-2}} \right|=\frac{(4k-2)!}{2}$, therefore $\left| {{A}_{4k}} \right|=\frac{(4k-2)!}{2}(4k-1)4k$, it means that $i$ determines only by $k$ and isn't bounded.
 \end{proof}
\begin{lemma} \label{islmorph} If $n=4k+2$, then the group $Syl_2(A_n)$ is isomorphic
to $Syl_2(S_{4k})$.
\end{lemma}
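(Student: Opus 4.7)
The plan is to realise $Syl_2(A_{4k+2})$ as the ``parity-matched'' subgroup of $Syl_2(S_{4k+2})$ under its standard direct-product decomposition, and then project that subgroup isomorphically onto $Syl_2(S_{4k})$.

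First, I would invoke the well-known decomposition of Sylow $2$-subgroups of symmetric groups along the binary expansion of the degree. Since $4k+2=4k+2$ with the summands having disjoint binary supports, this yields $Syl_2(S_{4k+2})\cong Syl_2(S_{4k})\times Syl_2(S_2)$, with the two factors acting on the disjoint blocks $\{1,\dots,4k\}$ and $\{4k+1,4k+2\}$. In particular $Syl_2(S_2)=\{e,(4k+1,4k+2)\}$, and its non-trivial element is an odd permutation of $S_{4k+2}$.

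Next, observe that $Syl_2(A_{4k+2})=Syl_2(S_{4k+2})\cap A_{4k+2}$ consists precisely of those pairs $(g,h)$ with $\sigma(g)=\sigma(h)$, where $\sigma$ denotes the sign in the ambient symmetric group. Because $h$ is uniquely determined by its sign, this is exactly the case $n=4k+2$ of formula~(\ref{HHHH}): $h=e$ when $g\in Syl_2(A_{4k})$, and $h=(4k+1,4k+2)$ when $g\in Syl_2(S_{4k})\setminus Syl_2(A_{4k})$. Both parities of $g$ actually occur because $Syl_2(S_{4k})$ contains transpositions (for instance the generator $\alpha_0$ acting on a single $2$-block), so it is not contained in $A_{4k}$.

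The map $\phi:Syl_2(A_{4k+2})\to Syl_2(S_{4k})$, $(g,h)\mapsto g$, is then the restriction of the projection from the direct product, so it is a homomorphism; it is bijective since $h$ is uniquely recoverable from $\sigma(g)$. An order check via Legendre's formula, in the style of Lemma~\ref{ordG_k}, independently confirms $|Syl_2(A_{4k+2})|=|Syl_2(S_{4k+2})|/2=|Syl_2(S_{4k})|$, consistent with $\phi$ being a bijection. The only delicate step is the first one --- justifying the direct-product decomposition with the blocks aligned so that the sign condition splits cleanly between them --- after which the result is just the parity bookkeeping already visible in the preceding properties.
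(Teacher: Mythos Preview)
Your argument is correct and is essentially the same as the paper's: both identify $Syl_2(A_{4k+2})$ with the parity-matched pairs inside $Syl_2(S_{4k})\times Syl_2(S_2)$ via formula~(\ref{HHHH}), and exhibit the bijection with $Syl_2(S_{4k})$. The only cosmetic difference is direction---the paper builds the explicit map $\phi\colon\sigma\mapsto\sigma\cdot(4k+1,4k+2)^{\chi(\sigma)}$ from $Syl_2(S_{4k})$ into $Syl_2(A_{4k+2})$ and checks the homomorphism property by cases, whereas you use the inverse projection $(g,h)\mapsto g$, which makes that check automatic.
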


\begin {proof} Bijection correspondence between set of elements of $Syl_2(A_n)$ and $Syl_2(S_{4k})$ we have from (\ref{HHHH}). Let's consider a mapping $\phi: Syl_2 (S_{4k}) \rightarrow Syl_2 (A_{4k+2})$ if $\sigma \in Syl_2(S_{4k})$ then $\phi(\sigma)=\sigma \circ (4k+1, 4k+2)^{\chi(\sigma)}=(\sigma,  (4k+1, 4k+2)^{\chi(\sigma)})$, where $\chi(\sigma)$ is number of transposition in $\sigma$ by module 2.

So $\phi(\sigma) \in Syl_2(A_{4k+2})$.
 If $\phi(\sigma) \in A_{n}$ then ${\chi(\sigma)}=0$, so $\phi(\sigma) \in Syl_2(A_{n-1})$. Check that $\phi$ is homomorphism.
Assume that ${{\sigma }_{1}}\in Sy{{l}_{2}}({{S}_{4k}}\backslash {{A}_{4k}}),\,\,{{\sigma }_{2}}\in Sy{{l}_{2}}({{A}_{4k}})$, then $\phi ({{\sigma }_{1}})\phi ({{\sigma }_{2}})=({{\sigma }_{1}},{{h}^{\chi({{\sigma }_{1}})}})({{\sigma }_{2}},e)=({{\sigma }_{1}}{{\sigma }_{2}},h)={{\sigma }_{1}}{{\sigma }_{2}}\circ (4k+1,4k+2)$, where $({{\sigma }_{i}},h)={{\sigma }_{i}}\circ {{h}^{\chi({{\sigma }_{i}})}}\in Sy{{l}_{2}}({{A}_{4k+2}})$. If ${\sigma _{1}},\,\,{\sigma_{2}}\in {{S}_{{{2}^{k}}}}\backslash {{A}_{{{2}^{k}}}}$, then $\phi ({{\sigma }_{1}})\phi ({{\sigma }_{2}})=({{\sigma }_{1}},{{h}^{\chi({{\sigma }_{1}})}})({{\sigma }_{2}},{{h}^{\chi({{\sigma }_{2}})}})=({{\sigma }_{1}}{{\sigma }_{2}},\,e)=(a,\,e)$, where ${{\sigma }_{1}}{{\sigma }_{2}}=a\in {{A}_{4k+2}}$.
So it is isomorphism.
\end {proof}

\begin{remark}
If $n=4k$, then index $Syl_2(A_{n+3})$ in $A_{n+3}$ is equal to $[S_{4k+1}: Syl_2 (A_{4k+1})](2k+1)(4k+3)$, index
$Syl_2(A_{n+1})$ in $A_{n+1}$ as a subgroup of index $2^{m-1}$, where $m$ is the
maximal natural number, for which $4k!$ is divisible by $2^m$.
\end {remark}
 \begin {proof} For $Syl_2(A_{n+3})$ its order equal to maximal power of 2 which divide $(4k+3)!$ but $(4k+3)!=(4k+1)!(4k+2)(4k+3)=(4k+1)!2(2k+1)(4k+3)$ so $\mid Syl_2 A_{n+3}\mid= 2^m \cdot 2 = 2^{m+1}$.
      Indexes are next: $  [S_{4k+1}: Syl_2 (A_{4k+1}) ] = \frac{(4k+1)!}{2^m} $ -- by condition of Remark, so $[S_{4k+3}: Syl_2 (A_{4k+3})] = [S_{4k+1}: Syl_2 (A_{4k+1}) ](2k+1)(4k+3) = \frac{(4k+1)!}{2^m}(2k+1)(4k+3) $.
\end {proof}

\begin{remark}
If $n=2k$ then
$[Syl_2(A_n) : Syl_2(S_{n-1})] =2^{m-1}$, where $m$ is the maximal power of 2 in factorization of $n$.
\end {remark}

 \begin {proof}
$|Syl_2(S_{n-1})|$ is equal to $t$ maximal power of 2 in $(n-1)!$. $|Syl_2(A_{n})|$ is equal to maximal power of 2 in $(n!/2)$. Since $n=2k$ then $(n/2)!=(n-1)!\frac{n}{2}$ and $2^f$ is equal to product maximal power of 2 in $(n-1)!$ on maximal power of 2 in $\frac{n}{2}$. Therefore $\frac{|Syl_2(A_{n})|}{|Syl_2(S_{n-1})|}=\frac{2^t} {2^{m-1}}{{2^t}}=2^{m-1}. $
Note that for odd $m=n-1$ the group $Syl_2(S_{m}) \simeq Syl_2(S_{m-1})$ i.e. $Syl_2(S_{n-1})\simeq Syl_2(S_{n-2})$. The group $Syl_2(S_{n-2})$ contains the automorphism of correspondent binary subtree with last level $X^{n-2}$ and this automorphism realizes the permutation $\sigma$ on $X^{n-2}$. For every $\sigma\in Syl_2(S_{n-2})$ let us set in correspondence a permutation $\sigma (n-1,n)^{\chi (\sigma)} \in Syl_2(A_{n})$, where $\chi (\sigma)$ -- number of transposition in $\sigma$ by $mod\, 2$, so it is bijection $\phi(\sigma)\longmapsto \sigma (n-1, n){\chi (\sigma)}$ which has property of homomorphism, see Lemma \ref{islmorph}. Thus we prove that $Syl_2(S_{n-1}) \hookrightarrow Syl_2(A_{n})$ and its index is $2^{d-1}$.
\end {proof}

\begin{remark}
For $n=2k+1$ we have the isomorphism
$Syl_2(A_n) \cong Syl_2(A_{n-1})$ and
$Syl_2(S_n) \cong Syl_2(S_{n-1})$.
\end{remark}
 \begin {proof}
 Orders of these subgroups are equal since Legender's formula count power of 2 in $2k+1$ and $2k$ but these powers are equal. So these maximal 2-subgroups are isomorphic. From Statement 1 can be obtained that in vertex with number $2k+1$ will be fixed to hold even number of transpositions on $X^{k_1}$ from decomposition of $n$ which is in Statement 1. For instance $Syl_2(A_{7})\simeq Syl_2(A_{6})$ and by the way $Syl_2(A_{6})\simeq C_2 \wr C_2 \simeq D_4$, $Syl_2(A_{11})\simeq Syl_2(A_{10}) \simeq C_2 \wr C_2 \wr C_2 $.
\end {proof}

\begin{remark}
The ratio of $|Syl_2(A_{4k+3})|$ and $|Syl_2(A_{4k+1})|$ is equal to 2 and ratio of indexes $[A_{4k+3} : Syl_2(A_{4k+3})]$ and $[A_{4k+1} : Syl_2(A_{4k+1})]$ is equal $(2k+1)(4k+3)$.
\end{remark}

\begin{proof} The
ratio of $|Syl_2(A_{4k+3})| : |Syl_2(A_{4k+1})|= 2$ because formula of Legendere give us new one power of 2 in $4k+3$ in compering with $4k+1$.  Second part of statement follows from theorem about $p$-subgroup of $H$, $[G:H] \neq kp $ then one of $p$-subgroups of $H$ is Sylow $p$-group of $G$. In this case $p=2$ but $|Syl_2(A_{4k+3})| : |Syl_2(A_{4k+1})|=2$ so we have to divide ratio of indexes on $2$.
\end{proof}
Let us present new operation $\boxtimes $ (similar to that is in \cite{Dm}) as a subdirect product of $Syl{{S}_{{{2}^{i}}}}$, $n=\sum_{i=0} ^j a_{k_i} {2}^{i}$, i.e. $Syl{{S}_{{{2}^{{{k}_{1}}}}}}\boxtimes Syl{{S}_{{{2}^{{{k}_{2}}}}}}\boxtimes ...\boxtimes Syl{{S}_{{{2}^{{{k}_{l}}}}}}=Par(Syl{{S}_{{{2}^{{{k}_{1}}}}}}\times Syl{{S}_{{{2}^{{{k}_{2}}}}}}\times ...\times Syl{{S}_{{{2}^{{{k}_{l}}}}}})$, where $Par(G)$ -- set of all even permutations of $G$. Note, that $\boxtimes $ is not associated operation, for instance $ord({{G}_{1}}\boxtimes {{G}_{2}}\boxtimes {{G}_{3}})\,\,\,=\left| {{G}_{1}}\times {{G}_{2}}\times {{G}_{3}} \right|:2$ but $ord({{G}_{1}}\boxtimes {{G}_{2}})\boxtimes {{G}_{3}}\,\,\,=\left| {{G}_{1}}\times {{G}_{2}}\times {{G}_{3}} \right|:4$. For cases $n=4k+1$, $n=4k+3$ it follows from formula of Legendre.

\begin {statment}
 Sylow subgroups of ${{S}_{n}}$ is isomorphic to $Syl{{S}_{{{2}^{{{k}_{1}}}}}}\times Syl{{S}_{{{2}^{{{k}_{2}}}}}}\times ...\times Syl{{S}_{{{2}^{{{k}_{l}}}}}}$, where $n={{2}^{{{k}_{1}}}}+{{2}^{{{k}_{2}}}}+...+{{2}^{{{k}_{l}}}}$, ${{k}_{i}}\ge 0$, $k_{i} < k_{i-1}$. Sylow subgroup $Sy{{l}_{2}}({{A}_{n}})$ has index 2 in $Syl_{2}({{S}_{n}})$ and it's structure: $Syl_2{S_{2^{{{k}_{1}}}}}\boxtimes Syl_2{{S}_{{{2}^{{{k}_{2}}}}}}\boxtimes ...\boxtimes Syl_2{S}_{{2}^{{k}_{l}}}$.
 \end {statment}

\begin{proof} Group $Sy{{l}_{2}}({{S}_{{{2}^{k}}}})$ is isomorphic to $\underbrace{{{C}_{2}}\wr {{C}_{2}}\wr ...\wr {{C}_{2}}}_{k}$ \cite{Dm, Sk} and this group is isomorphic to $AutX^{[k]}$, that acts at ${{2}^{k}}$ vertices on ${{X}^{k}}$. In case $Sy{{l}_{2}}({{S}_{n}}),\,\,\, \nexists k,k\in \mathbb{N}:\,\,  n = {{2}^{k}}$ but  $n={{2}^{{{k}_{1}}}}+\,{{2}^{{{k}_{2}}}}+...+{{2}^{{{k}_{l}}}},\,\,{{k}_{i}}\in \mathbb{N}\cup \left\{ 0 \right\}$  is a direct product: $Aut{{X}^{{[{k}_{1]}}}}\times ...\times Aut{{X}^{{[{k}_{l}]}}}$ \cite{Sh}. Power of 2 in this $|A_n|$: $\frac{2^{k_1}}{2}+\frac{2^{k_1}}{4}+...+1+\frac{2^{k_1}}{2}+\frac{2^{k_2}}{4}+...+1+\frac{2^{k_l}}{2}+...+1 - (\underbrace{1 + ... +1}_l)$. But only half of these automorphism determines even permutations on $X^k$ so we have been subtracted 1 $l$ times. For counting order of $Aut{{T}_{{{k}_{1}}}}\times ...\times Aut{{T}_{{{k}_{l}}}}$ we have to take in consideration that $AutX^{[k_i]}$ has active states on levels $X^0$, $X^1$, ... ,$X^{k_i-1}$ and
hasn't active states on $X^{k_i}$ so $|AutX^{[k_i]}|=1\cdot 2 \cdot 2^2 \cdot...\cdot 2^{k_i-1}=2^{2^{k_i}}-1$.

 Such presentations is unique because it determines by binary number presentation  $n={{2}^{{{k}_{1}}}}+\,\,...\,\,+{{2}^{{{k}_{l}}}},\,\,{{k}_{i}}\in \mathbb{N}\cup \left\{ 0 \right\}$, exactly for this presentation corresponds decomposition in direct product $Sy{{l}_{2}}{{S}_{{{2}^{{{k}_{1}}}}}}\times Sy{{l}_{2}}{{S}_{{{2}^{{{k}_{2}}}}}}\times ...\times Sy{{l}_{2}}{{S}_{{{2}^{{{k}_{l}}}}}}$ and this decomposition determines 2-subgroup of maximal order. It is so, since for alternating decomposition ${{2}^{{{k}_{1}}}}={{2}^{l}}+{{2}^{l}}$ order: $\left| Sy{{l}_{2}}{{S}_{{{2}^{{{k}_{1}}}}}} \right|>\left| Syl_{2}{{S}_{{{2}^{l}}}}\times Sy{{l}_{2}}{{S}_{{{2}^{l}}}} \right|$ or more precisely $\left| Sy{{l}_{2}}{{S}_{{{2}^{{{k}_{1}}}}}} \right|=2\left| Sy{{l}_{2}}{{S}_{{{2}^{l}}}}\times Sy{{l}_{2}}{{S}_{{{2}^{l}}}} \right|$ it following from structure of group binary trees.

 For instance $Syl_{2}{{S}_{22}}\simeq Sy{{l}_{2}}{{S}_{16}}\times Sy{{l}_{2}}{{S}_{4}}\times Sy{{l}_{2}}{{S}_{2}}$ and correspondent orders ${{2}^{15}},\,\,{{2}^{3}},\,\,2$ so it's order is ${{2}^{19}}={{2}^{15}}\cdot {{2}^{3}}\cdot 2$, on the other hand order of  $Sy{{l}_{2}}{{S}_{22}}$  by formula of Legendre is ${{2}^{19}}={{2}^{11+5+2+1}}$, analogously $Sy{{l}_{2}}{{S}_{24}}\simeq Sy{{l}_{2}}{{S}_{16}}\times Sy{{l}_{2}}{{S}_{8}}$ and ${{2}^{22}}={{2}^{15}}\cdot {{2}^{7}}$, on the other hand ${{2}^{22}}={{2}^{12+6+3+1}}$. Let us prove that such decompositions of $Sy{{l}_{2}}{{S}_{n}}$ and $Aut{{T}_{n}}$ are unique in accord with $n={{2}^{{{k}_{1}}}}+{{2}^{{{k}_{2}}}}+...+{{2}^{{{k}_{m}}}}$, where ${{k}_{1}}>{{k}_{2}}>...>{{k}_{m}}\ge 0$.

Really in accord with Legender's formula $n!$ is divisible by 2 in power $n-m={{2}^{{{k}_{1}}}}+{{2}^{{{k}_{2}}}}+...+{{2}^{{{k}_{m}}}}-m$ so $\left| Sy{{l}_{2}}{{S}_{22}} \right|={{2}^{n-m}}$, group $Aut{{T}_{{{k}_{1}}}}\times ...\times Aut{{T}_{{{k}_{m}}}}$, where ${{k}_{1}}>{{k}_{2}}>...>{{k}_{m}}\ge 0$, also has order ${{2}^{{{2}^{{{k}_{1}}-1}}+...+{{2}^{{{k}_{m}}-1}}}}={{2}^{n-m}}$ it follows from formula of geometric progression. Such decomposition $Sy{{l}_{2}}{{S}_{{{2}^{{{k}_{1}}}}}}\times Sy{{l}_{2}}{{S}_{{{2}^{{{k}_{2}}}}}}\times ...\times Sy{{l}_{2}}{{S}_{{{2}^{{{k}_{l}}}}}}$ determines 2-Sylows subgroup of maximal order. It is so, since for alternating decomposition ${{2}^{{{k}_{1}}}}={{2}^{l}}+{{2}^{l}}$ order: $\left| Sy{{l}_{2}}{{S}_{{{2}^{{{k}_{1}}}}}} \right|>\left| Sy{{l}_{2}}{{S}_{{{2}^{l}}}}\times Sy{{l}_{2}}{{S}_{{{2}^{l}}}} \right|$ or more precisely $\left| Sy{{l}_{2}}{{S}_{{{2}^{{{k}_{1}}}}}} \right|=2\left| Sy{{l}_{2}}{{S}_{{{2}^{l}}}}\times Sy{{l}_{2}}{{S}_{{{2}^{l}}}} \right|$ it
following from structure of group binary trees ($Aut{{T}_{l}}\times Aut{{T}_{l}}$) which correspondent for $Sy{{l}_{2}}{{S}_{{{2}^{l}}}}\times Sy{{l}_{2}}{{S}_{{{2}^{l}}}}$ and formula of geometric progression. It also follows from nesting (embeding) of Sylows subgroups. If $n={{2}^{{{k}_{1}}}}+{{2}^{{{k}_{2}}}}={{2}^{l}}+{{2}^{l}}+{{2}^{{{k}_{3}}}},$ where ${{2}^{{{k}_{1}}}}={{2}^{l}}+{{2}^{l}}$, then $Aut{{T}_{n}}\simeq Aut{{T}_{{{k}_{1}}}}\times Aut{{T}_{{{k}_{2}}}}$, if not then $Sy{{l}_{2}}{{A}_{n}}$ don't contains $Sy{{l}_{2}}{{A}_{{{2}^{{{k}_{1}}}}}}$.
\end{proof}

We call index of automorphism on $X^l$ number on active states (v. p.) on $X^l$.
\begin {defin} A generator of type \texttt{T} we will call automorphism ${{\tau }_{{{i}_{0}},...,{{i}_{{{2}^{k-1}}}};{{j}_{{{2}^{k-1}}}},...,{{j}_{{{2}^{k}}}}}}$, that has even index at ${{X}^{k-1}}$ and ${{\tau }_{{{i}_{0}},...,{{i}_{{{2}^{k-1}}}};{{j}_{{{2}^{k-1}}}},...,{{j}_{{{2}^{k}}}}}}\in S{{t}_{Aut{{X}^{k}}}}(k-1)$,
and it consists of odd number of active states in vertexes ${{v}_{k-1,j}}$ with number $ j \leq  2^{k-2}$ and odd number in vertexes ${{v}_{k-1,j}}$, ${2^{k-2}} < j \leq {2^{k-1}}$. Set of such elements denote \texttt{T}.

\end {defin}

\begin {defin}
A combined generator is such automorphism ${{\beta }_{{l};\tau }}$, that restriction ${{\beta }_{{l};\tau }}\left| _{{{X}^{k-1}}} \right.$ coincide with ${{\alpha }_{{l}}}$ from ${{S}_{\alpha }}$ and $Rist_{<\beta_{{i_l};\tau }>}(k-1)=\left\langle \tau' \right\rangle $, where $\tau' \in $\texttt{T}. Set of such elements denote \texttt{CG}.
\end {defin}

\begin {defin} A combined element is such automorphism $ {{\beta }_{1,{{i}_{1}};2,{{i}_{2}};...;k-1,{{i}_{k-1}};\tau }}$, that it's restriction
${{\beta}_{1,{{i}_{1}};2,{{i}_{2}};...;k-1,{{i}_{k-1}};\tau}}\left|_{{{X}_{k-1}}} \right.$ coincide with one of elements that can be generated by ${{S}_{\alpha}}$ and $Rist_{<{{\beta}_{1,{{i}_{1}};2,{{i}_{2}};...;k-1,{{i}_{k-1}};\tau'}}>}(k-1)  =\left\langle \tau' \right\rangle $ where $\tau' \in $\texttt{T}. Set of such elements denote \texttt{C}.
\end {defin}
In other word elements $g \in$\texttt{C} and $g' \in$\texttt{CG} on level  ${{X}^{k-1}}$ have such structure as generator of type \texttt{T}.

By distance between vertexes we shall understand usual distance at graph between its vertexes.
By distance of vertex permutations we shall understand maximal distance between two vertexes with active states from $X^{k-1}$.

\begin{lemma} \label{Lemma about keeping of distance} A vertex permutations on ${X}^{k}$ that has distance ${{d}_{0}}$ can not be generated by vertex permutations with distance ${{d}_{1}}:\,\,{{d}_{1}}<{{d}_{0}}$.
\end{lemma}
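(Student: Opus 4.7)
The plan is to exploit a parity argument attached to the subtree decomposition of $X^{[k]}$ at an intermediate level. Set $m = k-1 - d_0/2$, so that two vertices of $X^{k-1}$ are at tree-distance exactly $d_0$ precisely when their least common ancestor lies at level $m$, and at distance strictly less than $d_0$ precisely when their least common ancestor lies at level $m+1$ or below, i.e.\ inside a common subtree rooted at some vertex of level $m+1$.

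The first step is a \emph{confinement} claim: I would show that any automorphism whose $X^{k-1}$-active states have pairwise distance less than $d_0$ must have all of those active states inside one single subtree rooted at a level-$(m+1)$ vertex. The reason is transitivity: if $u,v$ and $v,w$ are each pairs at distance less than $d_0$, then the pair $u,v$ sits in some level-$(m+1)$ subtree containing $v$, the pair $v,w$ sits in some such subtree containing $v$, and since the level-$(m+1)$ subtrees are disjoint below, these must coincide; iterating this along the active set gives the claim.

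The second step is the \emph{parity invariant}. The group $W_{k-1}$ is abelian, and each of its elements has even support on $X^{k-1}$ by the parity constraint used in Proposition~\ref{ordW}. A product of such elements has support equal to the symmetric difference of the individual supports, and the mod-$2$ size inside each level-$(m+1)$ subtree is additive under this operation. Since every generator of distance less than $d_0$ contributes its whole (even) support to exactly one such subtree, every product obtained has even support inside every single level-$(m+1)$ subtree. Conjugation by elements of $B_{k-1}$ is a tree isometry that merely permutes the level-$(m+1)$ subtrees among themselves, so it preserves this parity invariant.

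Finally I would check that a vertex permutation of distance exactly $d_0$ violates the invariant: in the extremal case of a single transposition $\tau_{i,j}$ with $v_{k-1,i}$ and $v_{k-1,j}$ lying in two distinct level-$(m+1)$ subtrees whose LCA sits at level $m$, the support has odd cardinality, namely~$1$, inside each of those two subtrees, contradicting the conclusion of the second step. The main obstacle is ensuring that the bookkeeping in the first step is tight: because ``distance'' is a maximum over pairs of active states, one must lean on the disjointness of level-$(m+1)$ subtrees plus transitivity of containment to conclude that the set of active states is genuinely concentrated inside a single such subtree, rather than only being locally small in a more ambiguous sense.
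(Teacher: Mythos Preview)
Your parity-invariant approach is sound and considerably more rigorous than the paper's own argument, which only observes that tree automorphisms are isometries (so conjugation preserves distance) and then asserts, without justification, that a product of distance-$d_1$ elements again has distance $d_1$. Your confinement step is correct: since the level-$(m+1)$ subtrees partition $X^{k-1}$, any element whose active states have pairwise distance $<d_0$ is supported in a single such subtree, and your invariant---that a product of such elements has even support in every level-$(m+1)$ subtree, stable under conjugation by $B_{k-1}$---is exactly the mechanism behind the paper's subsequent Corollary on type-\texttt{T} elements.

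There is, however, a genuine gap in your final step, and it reveals that the lemma as literally stated is not true. You check the invariant only for a bare transposition $\tau_{i,j}$, where each of the two relevant subtrees carries support of size $1$. But an arbitrary element of distance $d_0$ may have \emph{even} support in every level-$(m+1)$ subtree. Concretely, for $k=3$ and $d_0=4$ (so $m=0$), the element $\tau_{1,2,3,4}$ has distance $4$ yet equals $\tau_{1,2}\cdot\tau_{3,4}$, a product of two distance-$2$ elements; your invariant does not exclude it, and indeed nothing can. What your argument actually proves is the correct and useful statement: an element of $W_{k-1}$ with \emph{odd} support in some level-$(m+1)$ subtree cannot be written as a product of elements each confined to a single such subtree. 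That is precisely what is needed downstream for type-\texttt{T} generators, so you should reformulate the claim accordingly rather than trying to salvage the lemma in its stated generality.
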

\begin{proof}
Really vertex permutation ${{\tau }_{ij}}:\,\,\,\,\,\rho ({{\tau }_{ij}})={{d}_{0}},\,\,\,{{d}_{0}}<{{d}_{1}}$ can be mapped by automorphic mapping only in permutation with  distance ${{d}_{0}}$ because automorphism keep incidence relation and so it possess property of isometry. Also multiplication of portrait of automorphism ${{\tau }_{ij}}:\,\,\,\,\,\rho ({{\tau }_{ij}})={{d}_{1}}$ give us automorphism with distance ${{d}_{1}}$, it follows from properties of group operation and property of automorphism to keep a distance between vertexes on graph.
\end{proof}

\begin{lemma} \label{about transposition} An automorphism $\tau $ ( or $\tau{'} \in$\texttt{T} ) can be generated only with using odd number of automorphism from \texttt{C} or \texttt{T}.
\end{lemma}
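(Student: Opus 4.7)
The plan is to construct a parity homomorphism $\phi \colon G_k \to \mathbb{Z}_2$ whose kernel contains $B_{k-1}$ and all of $W_{k-1}\setminus\mathtt{T}$, while $\phi=1$ on all of $\mathtt{C}\cup\mathtt{T}$. The lemma then follows immediately by additivity of $\phi$ over a product.

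First I would define $\phi$ on $W_{k-1}$. For $w\in W_{k-1}$, let $N_L(w)$ and $N_R(w)$ count the active states of $w$ at the vertices $v_{k-1,j}$ with $j\le 2^{k-2}$ and with $2^{k-2}<j\le 2^{k-1}$, respectively, and set $\phi(w)=N_L(w)\bmod 2$. By Proposition~\ref{ordW}, $W_{k-1}$ consists exactly of those configurations on $X^{k-1}$ with $N_L(w)+N_R(w)$ even, so $N_L(w)\equiv N_R(w)\pmod 2$. By the definition of $\mathtt{T}$ an element $w$ lies in $\mathtt{T}$ iff both $N_L(w)$ and $N_R(w)$ are odd, so $\phi=1$ on $\mathtt{T}$ and $\phi=0$ on $W_{k-1}\setminus\mathtt{T}$. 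Next, using the decomposition $G_k=B_{k-1}\ltimes W_{k-1}$ from the structure theorem, extend $\phi$ by $\phi(bw)=\phi(w)$. The non-trivial point is that this is a homomorphism, which reduces to verifying $\phi(w^{b})=\phi(w)$ for every $b\in B_{k-1}$. This is where the iterated wreath structure of $B_{k-1}\simeq C_2\wr\cdots\wr C_2$ is used: any such $b$ either preserves the partition of $X^{k-1}$ into its two halves (when its top $C_2$-coordinate is trivial) or swaps them. In the first case $N_L(w^{b})=N_L(w)$ directly; in the swapping case $N_L(w^{b})=N_R(w)\equiv N_L(w)\pmod 2$ by the parity identity above. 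Either way $\phi$ is preserved, so it is a well-defined homomorphism $G_k\to\mathbb{Z}_2$.

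Having $\phi$, I would finish by evaluating it on the various element classes. Any $b\in B_{k-1}$ has no active states on $X^{k-1}$ whence $\phi(b)=0$; the definitions of $\mathtt{T}$, $\mathtt{CG}$ and $\mathtt{C}$ say exactly that the restriction of such an element to $X^{k-1}$ has type-$\mathtt{T}$ structure, so $\phi=1$ on $\mathtt{C}\cup\mathtt{T}$. Thus $\phi^{-1}(1)=\mathtt{C}\cup\mathtt{T}$, and for any factorization $\tau'=g_1g_2\cdots g_n$ with $\tau'\in\mathtt{T}$ additivity gives
\[
1=\phi(\tau')=\sum_{i=1}^{n}\phi(g_i)\pmod 2,
\]
so the number of indices $i$ for which $g_i\in\mathtt{C}\cup\mathtt{T}$ is odd. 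The main obstacle will be the homomorphism check: one must exclude the possibility that some element of $B_{k-1}$ shuffles $X^{k-1}$ in a way that mixes the two halves partially (which would flip $N_L$-parity in an uncontrolled way), and this is precisely ruled out by the wreath-product observation that only global swap or global preservation of halves occurs, combined with the parity identity $N_L\equiv N_R$ that characterizes $W_{k-1}$ inside $C_2^{2^{k-1}}$.
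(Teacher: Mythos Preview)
Your argument is correct, and the invariant you isolate --- the parity $N_L\bmod 2$ of the number of active states in the left half of $X^{k-1}$ --- is exactly the quantity the paper's own proof hinges on. The paper, however, deploys it more informally: after a preliminary appeal to the distance-preservation Lemma~\ref{Lemma about keeping of distance}, it simply observes in one sentence that a product $P$ of an even number of elements of \texttt{T} has even left-half index and hence $P\notin\texttt{T}$. It does not explicitly package this as a homomorphism, nor does it check the $B_{k-1}$-invariance that you identify as the crux. Your formulation as a genuine homomorphism $\phi\colon G_k\to\mathbb{Z}_2$, with the invariance verified via the swap/preserve dichotomy of the root action together with the parity identity $N_L\equiv N_R$ on $W_{k-1}$, is tighter: it handles arbitrary words in $G_k$ at once (not just words in \texttt{T}), makes the identification $\phi^{-1}(1)=\texttt{C}\cup\texttt{T}$ explicit, and renders the distance argument unnecessary. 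Same heart, but your route is the more complete one.
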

\begin{proof}
Let us assume that there is no such ${{\tau }_{ij}}$ then accord to Lemma \ref{Lemma about keeping of distance} it is imposable to generate are pair of transpositions ${{\tau }_{ij}}$ with distance $\rho ({{\tau }_{ij}})=2k$ since such transpositions can be generated only by ${{\tau }_{ij}}$ that described in the conditions of this Lemma: $i\le {{2}^{k-2}},\,\,\,j>{{2}^{k-2}}$. Combined element can be decomposed in product ${{\beta }_{{{i}_{l}};\tau }}= \tau \dot {\beta }_{{i}_{l}} $ so we can express from it $\tau$. If we consider product $P$ of even number elements from \texttt{T} then automorphism $P$ has even number of active states in vertexes ${{v}_{k-1,i}}$ with number $ i \leq  2^{k-2}$ so $P$ does not satisfy the definition of generator of type \texttt{T}.
\end{proof}
\begin{corollary} \label{About generating distance} A generator of type \texttt{T} can not be generated by ${{\tau }_{ij}}\in Aut {{v}_{1,1}}{{X}^{[k-1]}}$ and ${{\tau }_{ij}}\in Aut {{v}_{1,2}}{{X}^{[k-1]}}$. I.e. $\tau $ can not be generated by vertex permutations with distance between vertexes less than $2k-2$ as have only elements of \texttt{T} or elements that can be decomposed in product which contains elements of \texttt{T}. The same corollary is true for a combined element.
\end{corollary}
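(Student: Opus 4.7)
The plan is to derive the corollary directly from Lemma \ref{about transposition} by a parity count, using Lemma \ref{Lemma about keeping of distance} to supply the alternative distance viewpoint. My first step would be to verify that no generator of the form $\tau_{ij} \in Aut\, v_{1,1}X^{[k-1]}$ or $\tau_{ij} \in Aut\, v_{1,2}X^{[k-1]}$ belongs to $\texttt{T} \cup \texttt{C}$. Indeed, a $\tau_{ij}$ with both indices $i,j \leq 2^{k-2}$ has all its active vertex permutations on $X^{k-1}$ concentrated in the left half and none in the right; this violates the defining condition of \texttt{T}, which demands an odd (hence positive) number of active states in each half. The restriction to $X^{k-1}$ of any combined element reproduces an element of \texttt{T}, so these $\tau_{ij}$'s do not belong to \texttt{C} either.

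I would then argue as follows. By Lemma \ref{about transposition}, any $\tau' \in \texttt{T}$ --- and in particular $\tau$ itself --- can only be expressed as a product of generators containing an odd number of letters from $\texttt{T} \cup \texttt{C}$. But any word built from the admissible $\tau_{ij}$'s involves zero such letters by the previous step, which is an even number; this contradicts the lemma. For the distance reformulation, each admissible $\tau_{ij}$ has both active-state vertices inside a subtree of depth $k-2$, hence $\rho(\tau_{ij}) \leq 2k-4$, while every $\tau' \in \texttt{T}$ has active states on both sides of the root and satisfies $\rho(\tau') = 2(k-1) = 2k-2$; Lemma \ref{Lemma about keeping of distance} then rules out the generation.

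For combined elements the same reduction works. Using the decomposition $\beta_{i_l;\tau} = \tau \cdot \beta_{i_l}$ recorded in the proof of Lemma \ref{about transposition}, any combined element $\beta_{1,i_1;\ldots;k-1,i_{k-1};\tau'}$ carries as its $X^{k-1}$-restriction an element $\tau' \in \texttt{T}$; if the combined element lay in the subgroup generated by the short-distance $\tau_{ij}$'s, then so would $\tau'$, contradicting the previous paragraph. The main subtlety I expect is the distance version: a product of two $\tau_{ij}$'s lying in opposite subtrees visibly spreads active states across both sides of the root, so one must read the conclusion of Lemma \ref{Lemma about keeping of distance} as bounding the diameter of the active support in the entire generated subgroup, not just individual letters. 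For this reason I would make the parity route through Lemma \ref{about transposition} the spine of the argument and present the distance statement as a corollary of it.
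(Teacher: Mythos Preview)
Your proposal is correct and follows essentially the same route as the paper: the paper's proof simply invokes Lemmas~\ref{Lemma about keeping of distance} and~\ref{about transposition}, observing that such $\tau_{ij}$ do not satisfy the conditions of Lemma~\ref{about transposition}, which is exactly the parity argument you spell out in detail. Your added remarks on why the $\tau_{ij}$ fail the \texttt{T}/\texttt{C} condition and on the subtlety of the distance formulation are a welcome expansion of what the paper leaves implicit.
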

\begin{proof}
It can be obtained from the last two Lemmas \ref{Lemma about keeping of distance} and \ref{about transposition} because such ${{\tau }_{ij}}\in Aut {{v}_{1,1}}{{X}^{[k-1]}}$ does not satisfy conditions of the Lemma \ref{about transposition}.
\end{proof}

It's known that minimal generating system ${{S}_{\alpha}}$ of $Aut{{T}_{k-1}}$ has $k-1$ elements \cite{Gr}, so if we complete it by $\tau$ (or element of type $\tau$) or change one of its generators on $\beta \in \text{T}$ we get system ${{S}_{\beta }}$: ${{G}_{k}}\simeq \left\langle {{S}_{\beta }} \right\rangle $ and $ |{{S}_{\beta }}|= k$.
So to construct combined element we multiply generator ${{\beta }_{i}}$ of ${{S}_{\beta }}$ (or arbitrary element $\beta $ that can be express from ${{S}_{\beta }}$) by the element of type $\tau $, i.e. we take $\tau' \cdot {{\beta }_{i}}$ instead of ${{\beta }_{i}}$ and denote it  ${{\beta }_{i;\tau }}$. It's equivalent that $Ris{{t}_{{{\beta }_{1,i;2,...,j;\tau }}}}(k)=\left\langle \tau' \right\rangle $, where $\tau'$ -- generator of type $\tau $.

\begin{lemma} \label{About not closed set of element of type T} Sets of elements of types \texttt{T}, \texttt{C}  are not closed by multiplication and raising to even power.
\end{lemma}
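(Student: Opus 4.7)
The plan is to exploit the additive ($\mathbb{F}_2$-linear) structure of vertex permutations on level $X^{k-1}$: since each vertex on that level carries a $C_2$-permutation, the restriction of any element of $St_{AutX^{[k]}}(k-1)$ to $X^{k-1}$ is a vector in $\mathbb{F}_2^{2^{k-1}}$, and composition corresponds to coordinate-wise addition modulo $2$. Membership in \texttt{T} is characterized by two parity conditions, namely odd weight on the left half $\{v_{k-1,j}:j\le 2^{k-2}\}$ and odd weight on the right half $\{v_{k-1,j}:2^{k-2}<j\le 2^{k-1}\}$ of $X^{k-1}$. Each such condition is affine rather than linear over $\mathbb{F}_2$, hence not preserved under summation of any even number of type-\texttt{T} patterns; this is the core reason closure fails.

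For the product of two elements $\tau_1',\tau_2'$ of \texttt{T}, the key step is to add their active-state patterns on $X^{k-1}$ and observe that odd$+$odd$\equiv 0\pmod 2$ on each half, so $\tau_1'\tau_2'$ has even weight on both halves and violates the definition of \texttt{T}. For the even-power statement, every element of \texttt{T} lies in $St_{AutX^{[k]}}(k-1)$ and carries only $C_2$-permutations on $X^{k-1}$, so it is an involution; hence $(\tau')^{2n}=e$ has zero weight on each half, once again even, and therefore lies outside \texttt{T}.

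For \texttt{C} I would transport both arguments to level $X^{k-1}$ via the defining restriction. By the definition of \texttt{C}, the restriction of any $\beta$ in \texttt{C} to $X^{k-1}$ belongs to \texttt{T}; so the restriction to $X^{k-1}$ of a product $\beta_1\beta_2$ is a product of two \texttt{T}-elements and therefore has even weight on each half, contradicting the type-\texttt{C} condition on $X^{k-1}$. For an even power $\beta^{2n}$, the restriction to $X^{k-1}$ is trivial because $\beta|_{X^{k-1}}$ is an involution (any generator of $Rist_{\langle\beta\rangle}(k-1)=\langle\tau'\rangle$ is of type \texttt{T}, hence an involution on $X^{k-1}$), so $\beta^{2n}$ has no active states on $X^{k-1}$ at all and fails the required pattern there. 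The only subtle point, which I would state explicitly, is that a $\beta$ in \texttt{C} need not be an involution globally; but only its restriction to $X^{k-1}$ matters for the type-\texttt{C} test, and that restriction is an involution by the definition itself. I expect this restriction bookkeeping to be the main (and only) place where care is needed.
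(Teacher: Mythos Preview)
Your treatment of \texttt{T} is correct and matches the paper: elements of \texttt{T} lie in $St_{AutX^{[k]}}(k-1)$, so the level-$(k-1)$ pattern of a product is the $\mathbb{F}_2$-sum of the patterns, and the half-parity argument goes through. The paper argues the same way.

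For \texttt{C}, however, there is a real gap. An element $\beta\in\texttt{C}$ factors as $\beta=(\sigma;s)$ with $\sigma\in Aut X^{[k-1]}$ nontrivial and $s\in C_2^{2^{k-1}}$ a \texttt{T}-pattern. The level-$(k-1)$ pattern of $\beta_1\beta_2$ is not $s_1+s_2$ but $s_1^{\sigma_2}+s_2$ (one pattern gets permuted by the upper action of the other), and likewise $\beta^{2n}$ has level-$(k-1)$ pattern $\sum_{i=0}^{2n-1}s^{\sigma^i}$, which need \emph{not} be trivial. For a concrete counterexample take $k=3$, $\sigma=\alpha_1$ (swapping $v_{2,1}\leftrightarrow v_{2,2}$), and $s=(1,0,0,1)$; then $\beta^2$ has level-$2$ pattern $(1,1,0,0)\neq 0$. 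So your claim that ``$\beta^{2n}$ has no active states on $X^{k-1}$ at all'' is false, and the sentence ``the restriction to $X^{k-1}$ of $\beta_1\beta_2$ is a product of two \texttt{T}-elements'' needs justification.

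The paper closes this gap with the observation you are missing: any tree automorphism either preserves or swaps the two subtrees $v_{1,1}X^{[k-1]}$ and $v_{1,2}X^{[k-1]}$ as wholes (this depends only on the root state), hence its induced permutation of $X^{k-1}$ sends each half $\{v_{k-1,j}:j\le 2^{k-2}\}$, $\{v_{k-1,j}:j>2^{k-2}\}$ to a half. Consequently each permuted pattern $s^{\sigma^i}$ still has odd weight on both halves, and summing an even number of such patterns gives even weight on each half. Once you insert this remark, your argument for \texttt{C} becomes valid and coincides with the paper's; without it, the even-power step is incorrect as stated.
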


\begin{proof}
Let  $\varrho, \rho \in$ \texttt{T} (or \texttt{C}) and $\varrho, \rho = \eta$. The numbers of active states in vertexes $v_{k-1, i}$, $1 \leq i \leq 2^{k-2}$ of $\varrho$ and $\rho$ sums by $mod 2$, numbers of active states in vertexes on $v_{k-1, i}$, $1 \leq i \leq 2^{k-2}$ of $\varrho$ and $\rho$ sums by $mod 2$ too. Thus $\eta$ has even numbers of active states on these corteges.
 So $ RiS{{t}_{\left\langle \eta  \right\rangle }}(k-1)$ doesn't contains elements of type $\tau $ so $\eta \notin $\texttt{T}. Really if we raise to even power element ${{\beta }_{1,{{i}_{1}};2,{{i}_{2}};...;k-1,{{i}_{k-1}};\tau }}\in $\texttt{T} or we (calculate) evaluate product of even number multipliers from \texttt{C} corteges ${{\mu }_{0}}$ and ${{\mu }_{1}}$ permutes with wholly subtrees ${{v}_{1,1}}{{X}^{[k-1]}}$ and ${{v}_{1,2}}{{X}^{[k-1]}}$, then we get element $g$ with even index of ${{X}^{k}}$ in ${{T}_{k,0}}$ and ${{T}_{k,1}}$. Thus $g \notin $\texttt{T}. Consequently elements of \texttt{C} don't form a group, set \texttt{T} as a subset of \texttt{C} is not closed too.
\end{proof}
We have to take into account that all elements from \texttt{T} have the same main property to consists of odd number of active states in vertexes ${{v}_{k-1,j}}$ with index $j\le {{2}^{k-2}}$ and odd number in vertexes with index $j:$ ${{2}^{k-2}}<j\le {{2}^{k-1}}$.

Let ${S_{\alpha }}=\left\langle {{\alpha }_{0}},\,{{\alpha }_{1}},...,{{\alpha }_{k-2}} \right\rangle $, $S_{\alpha }^{'}=\left\langle {{\alpha }_{0}},{{\alpha }_{0;1,({i}_{11},{i}_{12});}},...,{{\alpha }_{{0};1,({{i}_{11},...});...,k-2,({i}_{k-2,1},...) }} \right\rangle $, $\left\langle {{S}_{\alpha }} \right\rangle =\left\langle S_{\alpha }^{'} \right\rangle =Aut{{X}^{k-1}}$, $rk\left( {{S}_{a}} \right)=k-1$ where $rk\left( S \right)$ is rank of system $S$ and rank of group \cite{Bog} generated by $S$. Let $S_{\beta }={{S}_{\alpha }}\cup \tau_{i...j} $, $\tau_{i...j} \in$\texttt{T},
${{S}_{\beta }^{'}}=\left\langle
{{\beta }_{0}},{\beta }_{1,({1});\tau^x },..., {{\beta }_{k-2, (1);\tau^x }}, \tau \right\rangle $, $x \in \{0,1\}$, note if $x=0$ then ${\beta }_{{l,({1})};\tau^x} = {\beta }_l$.
$S_{\beta }^{*}=\left\langle
\mathop {\beta}_{0;1,(i_{11},i_{12}); ... ;{m},(i_{m1},...,i_{mj};\tau) }, \ 0<m<k-1, j \leq 2^{m}
  \right\rangle $. In $S_{\beta }^{'}$ can be used $\tau  $ instead of $\tau_{i...j}\in T $ because it is not in principle for proof so let $S_{\beta }=S_{\alpha }\cup \tau $ hence $\left\langle {{S}_{\beta }} \right\rangle ={{G}_{k}}$.
Without loss of generality
  of the proofs we use in $S_{\beta }$ element $\tau$ instead of arbitrary $\tau_{i,...,j} \in$\texttt{T}, also in $S_{\beta }^{*}$ we could use elements of \texttt{C}: ${{\beta }_{{{i}_{j}};\tau }},\,\,\,\,{{\beta }_{{{i}_{_{m}}}}}$ instead more complex elements $\mathop {\beta}_{l_0,l_1,(i_{11},i_{12});...;l_{m},(i_{m1},...,i_{mj};\tau) } $ that can be expressed from  ${{S}_{\beta }^{'}}$ by conjugations and multiplications its elements, note that conjugation by ${\beta }_{l,({1});\tau^x }$ is the same that conjugation by ${\beta }_{l,({1})}$, because all such elements have property of elements from \texttt{C} to have odd number of active states in vertexes ${{v}_{k-1,j}}$ with index $j\le {{2}^{k-2}}$ and odd number in vertexes with index $j:$ ${{2}^{k-2}}<j\le {{2}^{k-1}}$ what will be used in proof, hence $S_{\beta }^{*}$ can be renewed by $S_{\beta }^{'}$. Element ${\beta }_{l,({1+2^{l_1}});\tau^x }$ can be expressed as ${\beta }_{l,({1+2^{l_1}});\tau^x }=\beta_{(k-1)-l_1,({1});\tau^x} {\beta }_{{l,(1)};\tau^x}    (\beta_{(k-1)-l_1,({1});\tau^x})^{-1}$.

 To express element of type \texttt{T} from ${{S}_{\beta }^{'}}$ we have to use a word ${{\beta }_{i,\tau }}\beta _{i}^{-1}=\tau $ but if ${{\beta }_{i,\tau }}\in {{S}_{\beta }^{'}}$ then ${{\beta }_{{{i}}}}\notin S_{\beta }^{'}$ so we have to find relation in restriction of group ${{G}_{k}}$ on ${{X}^{k-1}}$ to express word ${{\beta }_{i,\tau }}\beta _{i}^{-1}\left| _{{X}^{k-1}} \right.=e$ then ${{\beta }_{i,\tau }}\beta _{i}^{-1}=\tau $. Next lemma investigate existing  of such word.  We have to take in consideration that $ {{G}_{k}}\left| _{{{X}^{k-1}}} \right.={{B}_{k-1}}\simeq Aut{{X}^{k-1}} $. Let us assume that $\beta_{i_0}\in S_{\beta }^{'}$ for arbitrary $0 <i_0 <k$.

\begin{lemma} \label{Lem About product elem of C}
An element of type \texttt{T} cannot be expressed by $S_{\beta }^{'}={{S}_{\beta }}\cup \{{{\beta }_{{{i}_{_{0}}};\tau }}\}\backslash \{{{\beta }_{{{i}_{_{0}}}}},\tau \}$ with using product, where are odd number of the elements of combine type.
\end{lemma}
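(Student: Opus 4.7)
The plan is to argue via restriction of $G_k$ onto the truncated tree $X^{[k-1]}$ and to exploit the minimality of the generating system $\{\alpha_0,\ldots,\alpha_{k-2}\}$ of $B_{k-1}\simeq Aut\, X^{[k-1]}$. The crucial observation is that every element of type \texttt{T} lies in $St_{Aut X^{[k]}}(k-1)$, so its restriction to $X^{[k-1]}$ is trivial; on the other hand, $\beta_{i_0;\tau}|_{X^{[k-1]}}=\alpha_{i_0}$ by the definition of the combined element, while each $\alpha_j\in S_\beta^{'}$ with $j\ne i_0$ restricts to itself. Thus the restriction map $r\colon G_k\to B_{k-1}$ sends $S_\beta^{'}$ onto $\{\alpha_0,\ldots,\alpha_{k-2}\}$ and, crucially, sends the unique combined generator $\beta_{i_0;\tau}$ to $\alpha_{i_0}$.

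First I would assume, towards a contradiction, that some element $g\in\texttt{T}$ admits an expression $g=w_1\cdots w_n$ with $w_i\in S_\beta^{'}$ in which the factor $\beta_{i_0;\tau}$ (or its inverse) occurs an odd number of times in total. Applying $r$ kills the left-hand side and rewrites the right-hand side as a word in $\alpha_0,\ldots,\alpha_{k-2}$ in which $\alpha_{i_0}$ has odd exponent sum, while the other $\alpha_j$ may appear with arbitrary exponent sums.

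Next I would pass to the Frattini quotient $B_{k-1}/\Phi(B_{k-1})$, which is an elementary abelian $2$-group of rank $k-1$ because $B_{k-1}$ is a finite $2$-group. The images of any minimal generating set form an $\mathbb{F}_2$-basis of this quotient, so the image of the above word has coordinate $1$ in the position indexed by $\alpha_{i_0}$, whereas the image of the identity is $0$. This is the desired contradiction, and it shows directly that $g$ could not have been an element of \texttt{T} to begin with.

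The main obstacle I anticipate is a bookkeeping one: one must verify that \emph{hidden} occurrences of the combined element, introduced for example by conjugations of $\beta_{i_0;\tau}$ by other factors of $S_\beta^{'}$ inside the word, do not alter the parity of its total multiplicity. This is transparent in the Frattini quotient, where conjugation acts trivially, so the exponent of $\alpha_{i_0}$ in the image depends only on how many times $\beta_{i_0;\tau}$ and $\beta_{i_0;\tau}^{-1}$ appear in the original word (and the inverse contributes the same parity as the element itself). With this remark the argument reduces to the linear-algebraic fact just quoted, and the lemma follows.
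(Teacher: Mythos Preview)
Your argument is correct and rests on the same core observation as the paper's: an element of type \texttt{T} restricts trivially to $B_{k-1}$, so any word expressing it must be a relation among the $\alpha_i$'s, and in particular the exponent sum of $\alpha_{i_0}$ in that relation must be even.

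The difference is in how this last parity fact is justified. The paper appeals to an explicit presentation of $B_{k-1}\simeq\underbrace{S_2\wr\cdots\wr S_2}_{k-1}$ from \cite{DrSku,Sk}, observing that the defining relators there are commutators and squares, each of which has zero ``logarithm'' (exponent sum) in every generator; hence every relation has even exponent sum in each $\alpha_i$. You bypass this by passing to the Frattini quotient $B_{k-1}/\Phi(B_{k-1})\cong\mathbb{F}_2^{\,k-1}$ and invoking Burnside's basis theorem, which gives the same conclusion without needing to know the relations explicitly. Your route is more self-contained and conceptually cleaner; the paper's route has the mild advantage of making visible the connection to the wreath-product presentation used elsewhere in the argument, but both establish the lemma equally well.
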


\begin{proof}
Let's, show that an element of type $\tau $ cannot be expressed from $S_{\beta }^{'}$ without using in product of generators even number of combine generators, because word ${{\alpha }_{{{i}_{1}}}}{{\alpha }_{{{i}_{2}}}}...{{\alpha }_{{{i}_{n}}}}=e$ of $Aut{{T}_{k-1}}$ is word that may be reduced to relation in $\underbrace{{{S}_{2}}\wr ...\wr {{S}_{2}}}_{k-1}$, which studied in \cite{DrSku, Sk}.
We can express element of type $\tau $ only if we use elements from \texttt{C}. Note that element of type $\tau $ belongs to $Ri\text{S}{{\text{t}}_{Aut{{T}_{k}}}}(k-1)$. Let us show that automorphism $\alpha :\,\,\,\alpha \left| _{{{X}^{k-1}}} \right.\in ~Ri\text{S}{{\text{t}}_{G}}(k)$, i.e. that is trivial at restriction on ${X}^{k-1}$ and belongs to
\texttt{T} can not be expressed from $S_{\beta }^{'}$ without using even number of elements from \texttt{C} because word ${{\alpha }_{{{i}_{1}}}}{{\alpha }_{{{i}_{2}}}}...{{\alpha }_{{{i}_{n}}}}=e$ of $Aut{{T}_{k-1}}$ is word that may be reduced to relation in $\underbrace{{{S}_{2}}\wr ...\wr {{S}_{2}}}_{k-1}$. It known that in wreath product $\wr _{1}^{k}{{\mathcal{C}}_{n}}$ holds a relations $\beta _{i}^{m}=e$ and $[\beta _{i}^{-1}{{\beta }_{j}}{{\beta }_{i}},\sigma _{j}^{{}}]=e,\,\,i<j$, $\left\langle {{\beta }_{j}},{{\sigma }_{j}} \right\rangle \simeq {{G}_{j}}$, \cite{Sk, DrSku} or more specially $\left[ \beta _{m}^{i}{{\beta }_{{{i}_{n}}.\tau }}\beta_{m}^{-i},\,\beta _{m}^{j}{{\beta }_{{{i}_{k}}.\tau }}\beta _{m}^{-j} \right]=e,\,\,\,i\ne j$, where $n,k,m$ are number of group in $\wr _{1}^{k}{{\mathcal{C}}_{l}}$ ($m<n$, $m<k$) \cite{Sk}, ${\beta }_{{{i}_{n}}}$, ${{\beta }_{m}},\,\,{{\beta }_{{{i}_{k}}}}$ is generator from ${{S}_{^{\beta }}}$.
As we see such relations $r_i$ has structure of commutator so logarithm \cite{K} of every $r_i$ by every generator $\beta_i \in S_\beta$ is zero because every exponent entering with different signs so every element from \texttt{C} has logarithm 0 hence is not equal to element of T because Lemma \ref{About not closed set of element of type T}.
 Really product of 2 arbitrary elements of \texttt{T} (analogously  \texttt{C}) isn't element of \texttt{T}.
So other way to express automorphism $\alpha :\,\,\,\alpha \left| _{{{X}^{k-1}}} \right.\in ~Ri\text{S}{{\text{t}}_{{{B}_{k-1}}}}(k-1)$ doesn't exist according to relations in ${{G}_{k}}$. Cyclic relation in this group has form $\beta _{{{i}_{0}};\tau }^{{{2}^{x}}}=e$, analogously for $\beta _{1,{{i}_{1}};2,{{i}_{2}};...;k-1,{{i}_{k-1}};\tau }^{2y}=e$ (analogously  for $\beta _{1,{{i}_{1}};2,{{i}_{2}};...;k-1,{{i}_{k-1}};\tau }^{2y}=e$) because it is 2-group, so we need to raise generator to even power.
So other way to express automorphism $\alpha :\,\,\,\alpha \left| _{{{X}^{k-1}}} \right.\in ~Ri\text{S}{{\text{t}}_{{{B}_{k-1}}}}(k-1)$ does not exist according to relations in ${{G}_{k}}$.
\end{proof}

\begin{lemma}\label{corresp} The systems ${{S}_{\alpha }}$ and $S_{\alpha}^{'} $, ${{S}_{\beta }}$ and $S_{\beta }^{'}$ are in bijective correspondence.
\end{lemma}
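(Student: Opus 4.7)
The approach is to exhibit the natural level-indexed maps $\phi\colon S_{\alpha}\to S_{\alpha}'$ and $\psi\colon S_{\beta}\to S_{\beta}'$ sending each pure generator to its decorated counterpart supported on the same top level (with $\psi(\tau)=\tau$), and to verify that these are set-theoretic bijections that take one generating system to the other. The cardinality check is immediate by inspection: $|S_{\alpha}|=|S_{\alpha}'|=k-1$ and $|S_{\beta}|=|S_{\beta}'|=k$. Injectivity on either side follows because each generator is uniquely identified by its top-level active support, which the correspondence preserves.

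The main verification is that $\phi$ and $\psi$ carry generating systems to generating systems, so that the resulting subgroups coincide ($\operatorname{Aut}X^{[k-1]}$ in the first case, $G_{k}$ in the second). I would proceed by induction on the level $l$. The base case is $\alpha_{0}'=\alpha_{0}$, with nothing below level $0$ to decorate. Assuming $\langle \alpha_{0}',\ldots,\alpha_{l-1}'\rangle=\langle \alpha_{0},\ldots,\alpha_{l-1}\rangle$, the decorated generator $\alpha_{l}'$ differs from $\alpha_{l}$ by a correcting element $d_{l}$ whose active states are confined to levels strictly below $l$; by Corollary~\ref{genG_k-1} this $d_{l}$ already lies in the inductive subgroup, so $\alpha_{l}=\alpha_{l}' d_{l}^{-1}$ is available from $\{\alpha_{0}',\ldots,\alpha_{l}'\}$ and, symmetrically, $\alpha_{l}'$ is available from $\{\alpha_{0},\ldots,\alpha_{l}\}$. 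This closes the induction and gives $\langle S_{\alpha}\rangle=\langle S_{\alpha}'\rangle$. The argument for $S_{\beta}$ versus $S_{\beta}'$ is identical; the only new feature is that the decoration may carry a factor $\tau^{x}$, but since $\tau$ is a common element of both systems this factor can be absorbed into, or extracted from, the current generator.

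The main obstacle will be the precise identification of $d_{l}$ and the verification that it belongs to $\langle \alpha_{0},\ldots,\alpha_{l-1}\rangle$. I would handle this by combining the explicit formula for an arbitrary $\tau_{ij}$ in terms of conjugates of $\tau$ by the $\alpha$'s, established in Lemma~\ref{gen}, with the classical fact recalled in Corollary~\ref{B_k-1} that $\alpha_{0},\ldots,\alpha_{k-2}$ generate the full wreath-product group $B_{k-1}$ acting on the levels up to $k-2$. A secondary subtlety is that the corteges $(i_{j1},\ldots,i_{j2^{j}})$ parametrising the decorations are not uniquely determined: any two admissible choices differ by an element already lying in the inductive subgroup, so the bijection at the level of \emph{sets} is unaffected by this non-canonical freedom, which is all the lemma asserts.
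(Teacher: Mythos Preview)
Your approach differs substantially from the paper's. The paper does not argue by induction on levels; instead it invokes Hall's marriage theorem. Regarding $S_{\alpha}$ as indexed by the levels $0,\ldots,k-2$, one forms the bipartite graph in which $\alpha'\in S_{\alpha}'$ is joined to level $l$ whenever $\alpha'$ has a non-trivial state on $X^{l}$. The paper argues that any $m$ elements of $S_{\alpha}'$ are collectively active on at least $m$ distinct levels (otherwise their images in the rank-$(k-1)$ Frattini quotient would be dependent, contradicting minimality), so Hall's condition holds and a level-respecting perfect matching exists. The passage to $S_{\beta},S_{\beta}'$ is then built on this matching via the explicit relation $\beta_{j,(1);\tau}=\tau\alpha_{j}$ together with Lemma~\ref{gen}. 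Note also that the equality $\langle S_{\alpha}\rangle=\langle S_{\alpha}'\rangle$ is stated as a hypothesis where $S_{\alpha}'$ is introduced, so reproving it is not the content of the lemma.

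Your inductive step contains a genuine gap. You assert that $d_{l}=\alpha_{l}'\alpha_{l}^{-1}$ has active states only at levels strictly below $l$, but the paper allows the level-$l$ cortege of $\alpha_{l}'$ to be an arbitrary non-trivial tuple $(i_{l,1},\ldots,i_{l,2^{l}})$, not the single active state $(1,0,\ldots,0)$ carried by $\alpha_{l}$; hence $d_{l}$ may be non-trivial at level $l$ and lie outside $\langle\alpha_{0},\ldots,\alpha_{l-1}\rangle=\operatorname{Aut}X^{[l]}$. The corollary you invoke, Corollary~\ref{genG_k-1}, concerns the last-level subgroup $W_{k-1}$ and does not speak to membership in these intermediate groups. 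Finally, your bijection presupposes that the elements of $S_{\alpha}'$ have pairwise distinct top levels; this holds for the system the paper writes down explicitly, but an arbitrary minimal generating set need not have that property (already in $C_{2}\wr C_{2}$ one can choose both generators with top level $1$), which is exactly why the paper appeals to Hall's theorem rather than to a direct level-by-level pairing.
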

\begin{proof}
As is well known, for binary tree the rank of $Aut{{X}^{k-2}}$ is  $k-1$ \cite{Gr}, so for the elements of other least system of generators $S_{\alpha }^{'}$ the elements of ${{S}_{\alpha }}$ are in bijective correspondence. Any element of other system $S_{\alpha }^{'}$ has type ${{\alpha }_{0,({{i}_{0}});1,({{i}_{11}},{{i}_{12}});...;m,({{i}_{m1}},...,{{i}_{mJ}})}},\,\,m\le k-2$ and elements of ${{S}_{\alpha }}$: ${{\alpha }_{0}},{{\alpha }_{1}},{{\alpha }_{2}}\,,...,\,\,{{\alpha }_{k-2}}$.
Set $S_{m}^{l}$ of generators from $S_{\alpha }^{'}$ each of which has non trivial state on ${{X}^{l}}$ is such that generators from $S_{m}^{l}$ has at least one active state from one of $m$ different levels. So elements of $S_{m}^{l}$ has $m$ different preimages in ${{S}_{\alpha }}$. This is Hall's condition for existence of the pairmaching, this condition is hold because these systems have the same rank \cite{Bog}. Therefore we have bijection between ${{S}_{\alpha }}$  and $S_{\alpha }^{'}$. We construct ${{S}_{\beta }}$: ${{S}_{\beta }}={{S}_{\alpha }}\cup \tau $.
Let us show bijective correspondence between ${{S}_{\beta }}$ and $S_{\beta }^{'}$. To establish this correspondence we use that elements of $S_{\beta }^{'}$ are determed through ${{\beta }_{i,(1)}}={\alpha }_{_{i,(1)}}={\alpha }_{i} \in S_{\alpha }$ for some elements of $S_{\alpha }$
and  elements of \texttt{C} from $S_{\beta }^{'}$ are determed: ${{\beta }_{j,(1);\tau }}=\tau {{\alpha }_{j}}$.
  And $S_{\beta }^{*}$ is determed by $S_{\alpha }^{'}$
 through ${{\beta }_{0,({i}_{0}); ... ;m,({{i}_{m1}},...,{i}_{mJ})}}={{\alpha }_{0,({{i}_{0}}); ... ;m,({{i}_{m1}},...,{{i}_{mJ}})}} \in S_{\alpha }^{'}$
  for some elements of $S_{\beta }^{'}$ and elements of \texttt{C} from $S_{\beta }^{*}$ determ by ${{\beta _{0,({{i}_{0}});1,({{i}_{11}},{{i}_{12}});...;m,({{i}_{m1}},...,{{i}_{m2^m}});\tau}
    ^{'} }}=\tau {\alpha }_{0,({{i}_{0}});1,({{i}_{11}},{{i}_{12}});...;m,({i}_{m,1},...,{{i}_{mJ}})}$, $j\le {{2}^{k-1}}$, ${{\beta }_{i,(1);\tau }}\in C$. Element ${{\tau }_{i...j}}$ of type \texttt{T} can be expressed from ${{S}_{\beta }}={{S}_{\alpha }}\cup \tau$ that is proved in Lemma \ref{gen}. So we can get every ${{\beta }_{i,(1);{{\tau }_{i...j}}}}={{\tau }_{i...j}}{{\alpha }_{i}}$ and every ${{\beta }_{0,({{i}_{0}});1,({{i}_{11}},{{i}_{12}});...;m,({{i}_{m,1}},...,{{i}_{mJ}});{{\tau }_{i...j}}}}$, ${{\beta }_{0,({{i}_{0}});1,({{i}_{11}},{{i}_{12}});...;m,({{i}_{m,1}},...,{{i}_{mJ}})}}\in S_{\alpha }^{'}$.
\end{proof}



\begin{corollary}\label{generating pair} A necessary and sufficient condition of expressing an element  $\tau $ from $S_{\beta }^{'}$  is existing of pair:    ${{\beta }_{{{i}_{m}};\tau }},\,\,\,\,{{\beta }_{{{i}_{_{m}}}}}$ in $\langle S_{\beta}^{'} \rangle$. So rank of a generating system of $G_k$ which contains $S_{\beta }^{'}$ is at least $k$.
\end {corollary}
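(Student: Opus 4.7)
My plan is to establish the biconditional in two steps and then read off the rank bound as an immediate consequence. For \emph{sufficiency}, I would take the pair $\beta_{i_m;\tau},\beta_{i_m}\in\langle S_\beta'\rangle$ and form the quotient $\beta_{i_m;\tau}\beta_{i_m}^{-1}$. By the very definition of a combined generator, both factors have the same restriction $\alpha_{i_m}$ to $X^{k-1}$, so the quotient lies in $Rist_{G_k}(k-1)$, and the active states it carries on $X^{k-1}$ are precisely those of the grafted generator of type \texttt{T}. Hence the quotient is an element of \texttt{T}, and Lemma \ref{gen} converts any such element into $\tau$.

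For \emph{necessity}, I would suppose that $\tau$ is expressible as a word $w$ over the non-$\tau$ generators of $S_\beta'$ and argue that the pair must then be present in $\langle S_\beta'\rangle$. Lemma \ref{about transposition} forces $w$ to contain an odd number of letters from $\texttt{C}\cup\texttt{T}$. Since the only combined letter of $S_\beta'$ is $\beta_{i_0;\tau}$, the word $w$ must use it to an odd power. Lemma \ref{Lem About product elem of C} then prevents such a word, taken together with the non-combined $\alpha$-letters alone, from projecting trivially to $X^{k-1}$ while simultaneously landing in \texttt{T}, so $w$ must additionally contain a subword whose restriction to $X^{k-1}$ coincides with $\alpha_{i_0}$. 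By the bijective correspondence established in Lemma \ref{corresp}, any such subword plays the role of the non-combined partner $\beta_{i_0}$ inside $\langle S_\beta'\rangle$, yielding the required pair.

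The rank statement then follows at once: $|S_\beta'|=k$ by construction, and the necessity step shows that the role played by the pair $\beta_{i_m;\tau},\beta_{i_m}$ cannot be absorbed into any single generator, so no system of fewer than $k$ generators containing $S_\beta'$ can produce $\tau$. Hence every generating system of $G_k$ containing $S_\beta'$ has rank at least $k$. The main obstacle will be the necessity step: the parity/logarithm bookkeeping inherited from Lemma \ref{Lem About product elem of C} must be sharpened enough to force the appearance of $\beta_{i_0}$ itself, rather than merely of some element agreeing with $\alpha_{i_0}$ on $X^{k-1}$. The bijection in Lemma \ref{corresp} should close this gap, since that correspondence is pinned down by the $X^{k-1}$-restriction together with the \texttt{T}-pattern on $X^{k-1}$.
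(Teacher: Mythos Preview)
Your plan follows the paper's approach. The paper proves sufficiency by the same one-line computation $\beta_{i_m;\tau}\beta_{i_m}^{-1}=\tau$, and obtains necessity (and hence the rank bound) directly from Lemma~\ref{Lem About product elem of C} together with Lemma~\ref{About not closed set of element of type T}; your detour through Lemma~\ref{about transposition} and Lemma~\ref{corresp} is more elaborate than what the paper does, but the pivot is the same parity/relation argument encapsulated in Lemma~\ref{Lem About product elem of C}. One correction worth flagging: in the sense used in Lemma~\ref{Lem About product elem of C} one has $|S_\beta'|=k-1$ (recall $S_\beta'=S_\beta\cup\{\beta_{i_0;\tau}\}\setminus\{\beta_{i_0},\tau\}$), not $k$ as you write; that is exactly why Lemma~\ref{Lem About product elem of C}, by showing $\tau\notin\langle S_\beta'\rangle$, forces any generating system of $G_k$ containing $S_\beta'$ to add at least one further generator, giving rank $\geq k$. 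Your invocation of Lemma~\ref{corresp} to pin down $\beta_{i_0}$ is not needed: once $\tau\in\langle S_\beta'\rangle$ is assumed, the partner $\beta_{i_0}=\beta_{i_0;\tau}\tau^{-1}$ is there automatically, so the biconditional itself is nearly formal and the substantive content sits entirely in the rank claim.
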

\begin{proof} Proof can be obtained from Lemma \ref{Lem About product elem of C} and Lemma \ref{About not closed set of element of type T}  from which we have that element of type \texttt{T} cannot be expressed from $\left[ \beta _{m}^{i}{{\beta }_{{{i}_{n}}.\tau }}\beta _{m}^{-i},\,\beta _{m}^{j}{{\beta }_{{{i}_{k}}.\tau }}\beta _{m}^{-j} \right]=e,\,\,\,i\ne j$ because such word has even number of element from \texttt{C}. Sufficient condition follows from  relation  ${{\beta }_{{{i}_{m}};\tau }}\beta _{{{i}_{_{m}}}}^{-1}=\tau $.
\end{proof}

\begin{lemma} \label{rk} A system of generators of  ${{G}_{k}}$ contains $S_{\alpha}^{'}$ and has at least $k-1$ generators.
\end{lemma}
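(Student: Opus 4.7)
The plan is to use the canonical restriction epimorphism $\pi: G_k \to B_{k-1}$ defined by $\pi(g) = g|_{X^{k-1}}$. By Corollary \ref{B_k-1}, the image of $\pi$ is the whole group $B_{k-1} \simeq Aut X^{[k-1]}$, and by the structure theorem $G_k \simeq B_{k-1} \ltimes W_{k-1}$ its kernel is exactly $W_{k-1}$.

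First I would observe that if $S$ is any generating set of $G_k$, then $\pi(S)$ generates $B_{k-1}$. By the classical result cited in \cite{Gr}, the iterated wreath product $B_{k-1} \simeq \underbrace{C_2 \wr \cdots \wr C_2}_{k-1}$ has rank exactly $k-1$, because its Frattini quotient $B_{k-1}/\Phi(B_{k-1})$ is an elementary abelian $2$-group of dimension $k-1$. Consequently $|\pi(S)| \geq k-1$ and so $|S| \geq k-1$, which yields the numerical bound of the lemma.

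Second, to establish that such a generating system ``contains $S_\alpha^{'}$'' I would combine Lemma \ref{corresp} with the Frattini-quotient argument for finite $2$-groups: any generating set of $B_{k-1}$ projects onto a basis of the $(k-1)$-dimensional $\mathbb{F}_2$-vector space $B_{k-1}/\Phi(B_{k-1})$, and the $k-1$ elements of $S_\alpha$ and of $S_\alpha^{'}$ both project to such a basis. Lemma \ref{corresp} then supplies the explicit bijection converting the minimal set $S_\alpha$ of \cite{Gr}-type generators into the system $S_\alpha^{'}$, so that the restrictions of any minimal generating subset of $S$ realize (up to this change of basis) the system $S_\alpha^{'}$ after adjustment by elements of the kernel $W_{k-1}$.

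The main obstacle is the precise meaning of ``contains $S_\alpha^{'}$'': strictly, the elements of $S$ live in $G_k$ and not in $B_{k-1}$, so they may differ from the elements of $S_\alpha^{'}$ by factors in $W_{k-1}$. Resolving this requires that Lemma \ref{corresp} be read at the level of the quotient $G_k / W_{k-1} \simeq B_{k-1}$: the bijection produced there matches restriction classes, and the Frattini argument ensures that no proper subset of any generating set of $G_k$ can already cover the $k-1$ basis directions of $B_{k-1}/\Phi(B_{k-1})$, which is precisely the content of the lower bound combined with the correspondence.
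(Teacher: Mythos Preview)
Your approach is essentially the same as the paper's: both arguments pass to the quotient $G_k/W_{k-1}\simeq B_{k-1}$ via the restriction map, invoke the fact from \cite{Gr} that $B_{k-1}\simeq Aut X^{[k-1]}$ has rank $k-1$, and conclude that any generating set of $G_k$ must have at least $k-1$ elements since a generating set surjects onto a generating set of the quotient. Your version is in fact more explicit than the paper's (which just quotes $\mathrm{rk}(G)\ge\mathrm{rk}(G/N)$ from \cite{Magn}), and your discussion of what ``contains $S_\alpha'$'' should mean---reading it modulo $W_{k-1}$ via Lemma~\ref{corresp}---goes beyond what the paper's own proof spells out.
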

\begin{proof} Subgroup ${{B}_{k-1}}<{{G}_{k}}$ is isomorphic  to $Aut{{X}^{k-1}}$ that has a  least  system of generators  of $k-1$ elements \cite {Gr}. Moreover, the subgroup ${{B}_{k-1}}$ is a quotient group ${}^{{{G}_{k}}}/{}_{{{W}_{k-1}}}$, because  ${{W}_{k-1}}\triangleleft {{G}_{k}}$. As is well known that $\text{rk}(G)\ge \text{rk}(H)$, because all generators of $G_{k}$ may contain in different quotient classes \cite{Magn}.
\end{proof}

Let us assume that exists a system of generators $S_{\beta }^{''}$ for ${{G}_{k}}$, such that $rk(S_{\beta }^{''}) = k-1$.
Let ${{\beta }_{i,\tau }},\,\,{{\beta }_{i,\tau '}}$ are restrictions of automorphisms  of binary tree  on ${{X}^{k}}$ and restrictions of them on ${{X}^{k-1}}$. Let's denote them as ${{\alpha }_{i}}={{\beta }_{i,\tau }}\left| _{{{X}^{k-1}}} \right.=\,\,{{\beta }_{i,\tau '}}\left| _{{{X}^{k-1}}} \right.$.

\begin {remark} \label{doesnt contain} A system $ {{S}^{''}_{\beta }}$ does not contain elements ${{\beta }_{i}}$ and ${{\beta }_{i,\tau }}$ simultaneously.
\end {remark}

\begin{proof}
  Account Lemma \ref{rk} we have  $rk( S_{\alpha }^{'} )   \geq  k-1$ and $\langle S_{\alpha }^{'} \rangle = B_{k-1}$, $B_{k-1}$ as quotient group
  is embed in $G_k$. But Lemma \ref{Lemma about keeping of distance} leading to it remains to express an element from \texttt{T} to generate $G_k$.  And according to Lemma \ref{rk} $S_{\beta }^{''} \supseteq S_{\alpha}^{'} $ so $S_{\beta }^{''}$ already has $k-1$ generators.

If we converse condition of remark i.e. $\{{{\beta }_{i}}, {{\beta }_{i,\tau }}\}\in S_{\beta }^{''}$, then we can express $\tau  ={\beta }_{i,\tau } \cdot \beta _{i}^{-1}$ because $\langle  S_{\beta }^{''} \rangle = G_k $, where $\beta_{i}  \in {S}^{'}_{\alpha }$ for arbitrary $1 < i < k-1$. But $  {S}^{''}_{\alpha} $ together with $\beta _{i,\tau }$ has $k$ generators that is contradiction with $rk(S_{\beta }^{''})=k-1$.
If such elements exist that could be expressed ${{\beta }_{i,\tau }},\,\,{{\beta }_{i,\tau '}}$,  then them corresponds reduced words ${{\beta }_{i,\tau }}\left| _{{{X}^{k-1}}} \right.={{\beta }_{{{i}_{1}}}}{{\beta }_{{{i}_{2}}}}...{{\beta }_{{{i}_{m}}}}=\,\,{{\beta }_{{{j}_{1}}}}{{\beta }_{{{j}_{2}}}}...{{\beta }_{{{j}_{m}}}} = {{\beta }_{i,\tau '}}\left| _{{{X}^{k-1}}} \right.$, where ${{\beta }_{{{i}_{l}}}},{{\beta }_{{{j}_{k}}}}\in$\texttt{C} in particular case ${{\beta }_{{{j}_{1}}}}{{\beta }_{{{j}_{2}}}}...{{\beta }_{{{j}_{m}}}}\left| _{{{X}^{k-1}}} \right.={{\beta }_{i}}$.
And ${{\beta }_{{{i}_{1}}}}{{\beta }_{{{i}_{2}}}}...{{\beta }_{{{i}_{m}}}}\left| _{{{X}^{k-1}}} \right.={{\alpha }_{{{i}_{1}}}}{{\alpha }_{{{i}_{2}}}}...\,{{\alpha }_{{{i}_{m}}}}=\,\,{{\alpha }_{{{j}_{1}}}}{{\alpha }_{{{j}_{2}}}}...\,{{\alpha }_{{{j}_{l}}}}=\,{{\beta }_{{{j}_{1}}}}{{\beta }_{{{j}_{2}}}}...\,{{\beta }_{{{j}_{l}}}}\left| _{{{X}^{k-1}}} \right.$ so it could be expressed relation ${{\alpha }_{{{i}_{1}}}}{{\alpha }_{{{i}_{2}}}}...\,{{\alpha }_{{{i}_{m}}}}\alpha _{{{j}_{l}}}^{-1}\alpha _{{{j}_{2}}}^{-1}...\,\alpha _{{{j}_{1}}}^{-1}=e$. Because this is a relation from ${{S}_{2}}\wr {{S}_{2}}\wr ...\wr {{S}_{2}}$, then it has type of commutator  or product of reciprocally inverse elements or generator in power ${{2}^{s}}$ and their products. So number of generators in such relation are even. Restriction on ${{X}^{k}}$ of this elements give us product no equal to $e$ so at least one generator  ${{\alpha }_{i}}$ from word ${{\alpha }_{{{i}_{1}}}}{{\alpha }_{{{i}_{2}}}}...\,{{\alpha }_{{{i}_{m}}}}\alpha _{{{j}_{l}}}^{-1}\alpha _{{{j}_{2}}}^{-1}...\,\alpha _{{{j}_{1}}}^{-1}=e$ have different restrictions on ${{X}^{k}}$ in different  entering  of ${{\alpha }_{i}}$ in relation that were obtained, in other words this generator has different images in
 $S_{\beta }^{''}$  which could be obtained via ${{\beta }_{i,\tau }}=\tau {{\alpha }_{i}}$(Lemma 9 about bijection). So such $S_{\beta }^{''}$ has rank on 1 more then ${{S}_{\alpha }}$ what is contradiction.
\end{proof}
As a corollary of Remark \ref{doesnt contain} we see that such $S_{\beta }^{''}$ of rank $k-1$ does not exist.



It is well known that for finite 2-group $G$: $G' < G^2$, so $G'G^2 = G^2$.

\begin{statment} \label{comm}
Frattini subgroup $ \phi(G_k)= {{G_k}^{2}}\cdot [G_k,G_k]= {{G_k}^{2}} $ acts by all even permutations on ${{X}^{l}},\,\,\,0\le l<k-1$
and by all even permutations on $ X^{k}$ except for those  from \texttt{T}.
\end{statment}

\begin{proof}
Index of automorphism $\alpha^2 $, $\alpha  \in <{{S}_{\beta }}>$ on $X^l$ is always even. Really the parity of the number of vertex permutations at $X^l$ in the product $({\alpha }_i {\alpha }_j)^2$, ${\alpha }_i, {\alpha }_j \in  S_{\alpha }$, $i,j<k$ is determined exceptionally by the parity of the numbers of vertex permutations at ${{X}^{l}}$ in $\alpha $ and $\beta $ (independently of the action of v.p. from the higher levels). On $X^{k-1}$ group $G^2$ has all transpositions of form $\tau_{1 i+1},  i\leq 2^{k-1}$ which can be generated in such way $({\alpha }_{k-2} \tau_{12})^2= \tau_{1234}$, $\tau_{12}\tau_{1234} = \tau_{34}$,  $({\alpha }_{k-i} \tau_{12})^2= \tau_{1, 2, 1+2^{k-i}, 2+2^{k-i}}$ then $\tau_{1, 2, 1+2^{k-i}, 2+2^{k-i}} \tau_{12} =\tau_{1+2^{k-i}, 2+2^{k-i}}$. In such way we get set of form ${\tau_{12}, \tau_{23},, \tau_{34}, ... ,\tau_{2^{k-1}-1,2^{k-1}}}$. This set of transpositions is base for $W_{k-1}$.

Let us consider ${{\left( {{\alpha }_{0}}{{\alpha }_{l}} \right)}^{2}}={{\beta }_{;({{1,2}^{l-1}}+1)}}$. Conjugation by element ${{\beta }_{1(1,2)}}$ (or ${{\beta }_{i(1,2)}},\,\,0<i<l$) give us ability to express arbitrary coordinate $x:\,\,1\le x \leq {{2}^{l-1}}$ i.e. from ${{\beta }_{l({{1,2}^{l-1}}+1)}}$ we express ${{\beta }_{l(x{{,2}^{l-1}}+1)}}$, for instance  $x={{2}^{j-1}}+1$, $j<l$: ${{\beta }_{l-j(1,2)}}{{\beta }_{l({{1,2}^{l-1}}+1)}}{{\beta }_{l-j(1,2)}}={{\beta }_{l({{2}^{j-1}+1},{{2}^{l-1}}+1)}}$. If $x={{2}^{l-j}}+2$ it should to conjugate by ${{\beta }_{l-j(1,2)}} {{\beta }_{l-1(1,2)}}$, by such conjugations can be realized every shift on $x$ as 2-adic number. So in such way can be realized every ${{\beta }_{l(x{{,2}^{l-1}}+1)}}$ and analogously every  ${{\beta }_{l({{2}^{l-1}})},y}$ and ${{\beta }_{l(x,y)}}$. Hence we can express from elements of $G^2$ every even number of active states on $X^l$.

In the product  (of portraits) $\alpha ,\,\beta \in {{S}_{\alpha }}$  cortege of v. p. of
${{X}^{l}}$, $0\le i\le k-2$ is determined only by  parity of corteges from ${{X}^{l}}$ of $\alpha $ and $\beta$. So $[\alpha ,\beta ]=\alpha \beta {{\alpha }^{-1}}{{\beta }^{-1}}$ is permutation of even power on ${{X}^{l}}$, $0\le i\le k-2$ thus  commutators of generators from ${{S}_{\alpha }}$ generate permutation of even power. Really, let us generate by commutators of even permutations of ${{X}^{1}}$, let consider commutator $\left[ {{\alpha }_{0}},{{\alpha }_{1}} \right]={{\beta }_{0,1(1,1)}}$, trivial v.p. can be expressed by $\left[ {{\alpha }_{0}},{{\alpha }_{0}} \right]=e$.
The parity of the number of vertex permutations at $X^l$ in the product ${\alpha }_i {\alpha }_j$, ${\alpha }_i,{\alpha }_j \in  S_{\alpha }$) is determined exceptionally by the parity of the numbers of vertex permutations at ${{X}^{l}}$ in $\alpha $ and $\beta $ (independently of the action of v.p. from the higher levels). Thus $[\alpha ,\beta ]=\alpha \beta {\alpha }^{-1}{\beta }^{-1}$ has an even number of v. p. at each level. Therefore, the commutators of the generators from ${{S}_{\alpha }}$ generate only the permutations with even number of v. p. at each ${{X}^{l}}$, ($0\le l\le k-2$). Taking the products of all commutators with index $l$ we can express all automorphisms those induce all even permutations on ${{X}^{l}}$.
\end{proof}
As a corollary we get that $(Aut{{X}^{[l]}})'$ has all even indexes on each level $l\leq k-2$.


Structure of subgroup $G_{k}^{2}{{G}_{k}}'\triangleleft \underset{1}{\overset{k}{\mathop{\wr }}}\,{{S}_{2}}\simeq Aut{{X}^{[k]}}$ can be described in next way. This subgroup contains the commutant -- ${G}_{k}'$ so it has on each ${{X}^{l}},\,\,\,0\le l<k-1$ all even indexes that can exists in vertexes of ${{X}^{l}}$. On ${{X}^{k-1}}$ does not exist v.p. of type \texttt{T}  that is, which has distance $2k-2$, rest of even indexes are present on ${X}^{k-1}$. It's so, because the sets of elements of types \texttt{T} and \texttt{C} are not closed under operation of taking the even power. Thus, the squares of the elements don't belong to \texttt{T} and \texttt{C} (because they have the distance, which is less than $2k-2$). That's why their products also aren't the elements of types \texttt{T} and \texttt{C}, because, according to
 Lemma 5, the elements with the distance of v. p. less than $2k-2$ doesn't generate the elements with distance $2k-2$ after multiplication.
To show that all even permutations on ${{X}^{k}}$  can be expressed besides those automorphisms from \texttt{T} and \texttt{C}  let us consider $\beta _{0,k-1({{1,2}^{k-1}})}^{2}=\beta _{k-1({1,2^{k-2}},{2^{k-2}}+1,\,{{2}^{k-1}})}\notin $\texttt{C} and denote that this automorphism has index 4 on ${{X}^{k-1}}$ and consider $\beta _{1(1),k-1({1,2^{k-2}}+1)}^{2}=\beta _{k-1({1,2^{k-2}})}\notin $\texttt{C}  analogously for $\beta _{j(1),k-1({{1,2}^{k-1}}+1)}^{2}\,,j<k-1$. Note that $\beta _{k-1 ({{1,2}^{k-1}})}^{{}}\notin $ \texttt{C} but ${{\beta }_{0,k-1({{1,2}^{k-2}}+1)}}\in $C. If we change $j$ we can generate set of all even permutations (via multiplication of squares), which presents all automorphism from ${{G}_{k}}$ besides that from \texttt{T} and \texttt{C}, this set of permutations forms normal subgroup in ${{G}_{k}}$.
This implies the following corollary.

\begin{corollary} A quotient group ${}^{{{G}_{k}}}/{}_{G_{k}^{2}{{G}^{'}_{k}}}$ is isomorphic to $\underbrace{{{C}_{2}}\times {{C}_{2}}\times ...\times {{C}_{2}}}_{k}$.
\end{corollary}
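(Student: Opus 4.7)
The plan is to combine the explicit description of $G_k^2G_k'$ furnished by Statement~1 with the Burnside basis theorem, so the corollary drops out once the rank is pinned down. First I would recall that $G_k$ is a finite $2$-group, so $\Phi(G_k)=G_k^2 G_k'$ is the Frattini subgroup and the quotient $G_k/\Phi(G_k)$ is automatically elementary abelian of exponent $2$. Hence it only remains to identify its $\mathbb{F}_2$-dimension.

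Next I would define a map
\[
\varphi\colon G_k\longrightarrow \mathbb{F}_2^{\,k-1}\oplus \mathbb{F}_2
\]
that sends $g\in G_k$ to the $(k-1)$-tuple whose $l$-th entry ($0\le l\le k-2$) is the parity of the number of active vertex permutations of $g$ on $X^l$, together with a last coordinate equal to $1$ iff $g|_{X^{k-1}}$ is of type $\texttt{T}$ or $\texttt{C}$ (equivalently, contains an odd contribution of $\tau$). That $\varphi$ is a well-defined homomorphism follows from the observation, already used inside the proof of Statement~1, that the parity of the active states on each level in a product depends only on the parities of the factors, and similarly the $\texttt{T}/\texttt{C}$-status is additive modulo $2$ by Lemma \ref{About not closed set of element of type T}.

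I would then identify $\ker\varphi$ with $\Phi(G_k)$. The inclusion $\Phi(G_k)\subseteq\ker\varphi$ is immediate, since squares have even index on every level and the set $\texttt{T}\cup\texttt{C}$ is not closed under squaring, while commutators contribute trivially to each parity coordinate. The reverse inclusion is precisely the content of Statement~1: every element of $G_k$ whose parity profile on $X^0,\ldots,X^{k-2}$ is zero and whose $X^{k-1}$ part is a non-$\texttt{T}/\texttt{C}$ even permutation lies in $G_k^2G_k'$. Surjectivity of $\varphi$ is seen by evaluating on the generating set $S_\beta=\{\alpha_0,\alpha_1,\ldots,\alpha_{k-2},\tau\}$, which produces the $k$ standard basis vectors: $\alpha_l$ hits the $l$-th coordinate, and $\tau$ hits the last one.

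Thus $G_k/\Phi(G_k)\cong\mathbb{F}_2^{k}\cong\underbrace{C_2\times\cdots\times C_2}_{k}$. The main obstacle in this argument is the precise verification that no element of type $\texttt{T}$ lies in $G_k^2G_k'$, i.e.\ that the last coordinate of $\varphi$ is non-trivial; however this is exactly Lemma \ref{About not closed set of element of type T} together with Lemma \ref{Lemma about keeping of distance}, which guarantee that squares and commutators cannot produce the distance-$(2k-2)$ portrait characteristic of $\texttt{T}$. As a cross-check, the Burnside basis theorem together with Theorem \ref{isomor} and Remark \ref{doesnt contain} (minimality of the $k$-element generating set) independently forces $\dim_{\mathbb{F}_2}G_k/\Phi(G_k)=k$, which matches the construction of $\varphi$ above.
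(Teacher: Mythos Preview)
Your argument is correct and follows essentially the same route as the paper: both proofs build a surjection $G_k\to C_2^{\,k}$ whose first $k-1$ coordinates record the parity of active states on levels $X^0,\dots,X^{k-2}$ and whose last coordinate records the \texttt{T}-type parity on $X^{k-1}$, then identify the kernel with $G_k^2G_k'$ via Statement~\ref{comm}. Your write-up is considerably more streamlined, and the Burnside cross-check via Remark~\ref{doesnt contain} is a nice independent confirmation (and is not circular, since that remark does not rely on the present corollary).
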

\begin{proof}
Proof follows from that $G_{k}^{2}{{G}^{'}_{k}}\simeq G_{k}^{2}\triangleleft {{G}_{k}}$ and $\left| G:G_{k}^{2}{{G}^{'}_{k}} \right|=2^k$.  Restriction  ${{\left. {{G}_{k}} \right|}_{{{X}^{k}}}}$ acts only by permutations of 2 types by parity of permutation on sets ${{X}_{1}}=\{{{v}_{k,1}},...,{{v}_{k{{,2}^{k-1}}}}\}$ and ${{X}_{2}}=\{{{v}_{k,2^{k-1}+1}},...,{{v}_{k{{,2}^{k}}}}\}$, ${{X}_{1}}\cup {{X}_{2}}={{X}^{k}}$. Type 1, where on ${{X}_{1}}$ and ${{X}_{2}}$ $G_k$ acts by odd permutations
(it means that on set vertexes of $X^{k-1}$ over ${{X}_{1}}$ i.e. which joined by edge vertexes of $X^{k-1}$ over ${{X}_{1}}$ (and ${{X}_{2}}$ analogously) automorphism of ${{G}_{k}}$ has odd numbers of active states).
And type 2 (elements of which form a normal subgroup $G_k(2)$ in $G_k$), where on ${{X}_{1}}$ and ${{X}_{2}}$ this permutations act by all even permutations. Subgroup $G_{k}^{2}{{G}^{'}_{k}}$ as $G_k(2)$ acts only by permutations of type 2 on ${{X}_{1}}$, ${{X}_{2}}$, because
     of Statement 2.
      These types of resulting permutations $\rho_1 \rho_2 \left|_{{{X}^{l}}} \right. , \rho_1, \rho_2 \in G_{k}$ does not depend of portrait of automorphism on higher levels ${{X}^{l}},\,\,\,0\le l <k-1$
 because parity of result depends only from parity of summands  and does not depends of permutations of active states of
${{X}^{l}}$. So ${{\left. {}^{{{G}_{k}}}/{}_{G_{k}^{2}{{G}^{'}_{k}}} \right|}_{{{X}^{1}}}}$  has 2 quotient classes
because acts as $S_2$ on ${{X}_{1}}$ but $G_{k}^{2}{G}_{k}^{'}$ acts only by even permutations on ${{X}_{1}}$.
 Index of normal subgroup $  G_{k}^{2}{{G}_{k}^{'}} \left|_{{{X}^{2}}} \right.$ of group ${G}_{k}$ is $2^2$
 because $G_k$ acts on $X^2$ by $2^3$ permutations and $G_{k}^{2}{{G}_{k}^{'}}$ acts by 2 permutations so $|G_{k} \left|_{{{X}^{2}}} \right. : G_{k}^{2}{{G}_{k}^{'}} \left|_{{{X}^{2}}} \right. |=2^2$.
Analogously we have $2^i$ quotient classes for restriction on each level ${{X}^{i}}$, $0<i \leq k-2$. Index of every automorphism of $X^{k-2}$ from $B^{'}_{k-1}\simeq (AutX^{[k-1]})^{2}$ is even so ${}^{St_{B_{k-1}}(k-2)}\left|_{X^{[k-1]}} \right./{}_{St_{B^{2}_{k-1}}(k-2)}\left|_{X^{[k-1]}} \right. \simeq C_2$.
We could count this quotient classes in other way by constructing a homorphism similar to \cite{Gr} in ${C}_{2}$ from cortege of v.p. of level ${X}^{l}$ which forming a subgroup $G_k(X^l)$ as sum of non trivial v.p. by $\bmod 2$ from $X^l$, where $St_{G^2_k}(X^l)\left|_{X^{l}} \right. \lhd St_{G_k}(X^l)\left|_{X^{l}} \right. = G_k(X^l)$, $| St_{G_k}(l)\left|_{X^{[l]}} \right. : St_{G^2_k}(l) \left|_{X^{[l]}} \right. | = 2$. But for subgroup of v.p. on ${X}^{k-1}$ we construct a homorphism in ${C}_{2}$ as a sum of non trivial v.p. by $\bmod 2$ on each set ${X}_{1}$ and ${X}_{2}$.
For a subgroup of v.p. from ${{X}^{k-1}}$ (denote it ${{G}_{k}}({{X}^{k-1}})$, ${{G}_{k}}({{X}^{k-1}})\triangleleft {{G}_{k}}$) construct a homomorphism: $\rho \left( {{G}_{k}}({{X}_{k}}) \right)\to {{C}_{2}} \simeq {}^{{{G}_{k}}({{X}_{k}}})/{}_{ G_k(2)} $ as product of sum by $mod2$ of active states (${{s}_{ij}},\,i\in \{0,1\}$, $0<j\le {{2}^{k-2}}$ if ${{s}_{ij}}\in {{X}_{1}}$)
   on each set ${{X}_{1}}$ and ${{X}_{2}}$:  $\phi ({{X}_{1}})\,\,\bullet \,\,\phi ({{X}_{2}})=\sum\limits_{i=1}^{{{2}^{k-2}}}{{{s}_{k-1,i}}}(\bmod 2) \bullet \sum\limits_{i={{2}^{k-2}}+1}^{{{2}^{k-1}}}{{{s}_{k-1,i}}} (\bmod 2)$.
  Where ${{s}_{k-1,i}}=1$ if  there is active state  in ${{v}_{k-1,i}}$, ${{s}_{k-1,i}}=0$  if there is no active state. From structure of ${{G}_{k}}$ follows that $\phi ({{X}_{1}})\,\,=\,\,\phi ({{X}_{2}})$ so 0 or 1. But $G_{k}^{2}{{G}^{'}_{k}}$ admits only permutations of Type 2 on ${{X}^{k}}$ so $G_{k}^{2}{{G}_{k}}'({{X}_{k}})\triangleleft {{G}_{k}}({{X}_{k}})$ because it holds conjugacy and kernel of map from ${{G}_{k}}({{X}_{k}})$ to ${{C}_{2}}$ is $G_{k}^{2}{{G}_{k}}'({{X}_{k-1}})$.

Consider subgroup of v.p.  for arbitrary $0\le l<k-1$, ${{X}^{l}}$ (denote it ${{G}_{k}}({{X}^{l}})$, ${{G}_{k}}({{X}^{l}})\triangleleft {{G}_{k}}$).  A number of v.p. in ${{G}_{k}}({{X}^{l}})$ can be as even as odd. But number of v.p. in $G_{k}^{2}{G^{'}_{k}}(X^l)$ is even so $G_{k}^{2}{{G}_{k}}'({{X}^{l}})\triangleleft \,{{G}_{k}}({{X}^{l}})$ and there is 2 factor classes in ${}^{{{G}_{k}}({{X}^{l}})}/{}_{G_{k}^{2}{{G}^{'}_{k}}({{X}^{l}})}$ so it exists homomorphism from ${{G}_{k}}({{X}^{l}})$ in ${{\text{C}}_{2}}$.  Hence there is homomorphism from ${{G}_{k}}$ to $\prod\limits_{i=1}^{k}{{{\text{C}}_{2}}}$. So $\mid {{G}_{k}} : G_{k}^{2}{G^{'}_{k}} \mid = 2^k$ and correspondent quotient group is isomorphic to $\prod\limits_{{}}^{k}{{{C}_{2}}}$.
\end{proof}
   \begin{corollary} Group ${{G}_{k}}$ (${{G}_{k}}\simeq A_{2^k}$) has least system of generators from $k$ elements.
\end{corollary}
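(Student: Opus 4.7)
The plan is to establish that the minimum number of generators $d(G_k)$ equals $k$ by sandwiching it between an upper bound coming from the explicit generating system already constructed and a lower bound coming from the Frattini quotient. The upper bound is immediate from Theorem~\ref{isomor}, which exhibits $S_{\alpha}=\{\alpha_0,\alpha_1,\ldots,\alpha_{k-2},\tau\}$ as a generating set of $G_k$ of cardinality $k$; hence $d(G_k)\leq k$ and it only remains to rule out a generating set of size $k-1$.

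For the lower bound I would invoke the Burnside basis theorem: since $G_k$ is a finite $2$-group, $d(G_k)$ coincides with the $\mathbb{F}_2$-dimension of the Frattini quotient $G_k/\Phi(G_k)$, where $\Phi(G_k)=G_k^{2}\cdot G_k'$. The immediately preceding corollary identifies $G_k/(G_k^{2}G_k')$ with $\underbrace{C_2\times\cdots\times C_2}_{k}$, i.e.\ a $k$-dimensional $\mathbb{F}_2$-vector space. Therefore $d(G_k)\geq k$, and together with the upper bound this forces $d(G_k)=k$, so $S_{\alpha}$ is a minimal generating system.

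A self-contained alternative, already prepared for by the earlier lemmas, is to chain Lemma~\ref{rk}, Corollary~\ref{generating pair} and Remark~\ref{doesnt contain}: Lemma~\ref{rk} forces any generating system of $G_k$ to project onto a generating set of the quotient $B_{k-1}\simeq Aut X^{[k-1]}$, which already requires $k-1$ generators, while Corollary~\ref{generating pair} and Remark~\ref{doesnt contain} show that at least one further generator drawn from \texttt{C} or \texttt{T} is indispensable in order to produce an element of type \texttt{T}. The main obstacle in this combinatorial route is precisely the case analysis of Remark~\ref{doesnt contain} ruling out the possibility of hiding $\tau$ inside a cleverly chosen collection of combined generators; the Burnside-basis approach bypasses that difficulty entirely by reducing the whole question to the clean Frattini computation carried out in the previous corollary.
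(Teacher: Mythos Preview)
Your primary argument is correct and matches the paper's proof exactly: the paper invokes the previous corollary that $G_k/(G_k^2 G_k')\simeq C_2^k$ together with the Burnside basis theorem (citing Rotman) to conclude $d(G_k)=k$. The sandwich with the upper bound is harmless but redundant, since Burnside already gives equality rather than just a lower bound; your alternative combinatorial route via Lemma~\ref{rk}, Corollary~\ref{generating pair} and Remark~\ref{doesnt contain} is not used here but is essentially what the paper deploys later in the proof of the Main Theorem.
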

   \begin{proof}
Since quotient group of ${{G}_{k}}$ by subgroup of Fratiny $G_{k}^{2}{{G}^{'}_{k}}$  has least system of generators from $k$ elements because ${}^{{{G}_{k}}}/{}_{G_{k}^{2}{{G}^{'}_{k}}}$  is isomorphic to linear $p$-space $(p=2)$ of dimension $k$ (or elementary abelian group) then ${{G}_{k}}$ has rank $k$  \cite{Rot}.
   \end{proof}
 \begin{main_theorem}
The set $S_{\mathop{\beta}}=\{\mathop{\beta}_{0}, \mathop{\beta}_{1}, \mathop{\beta}_{2}, \ldots , \mathop{\beta}_{k-2}, \tau \}$, where $\mathop{\beta}_{i} = \alpha_i$, 
 is a minimal generating system for a group $G_k$ that is isomorphic to Sylow 2-subgroup of $A_{2^{k}}$.
\end{main_theorem}
We have isomorphism of $G_k$ and $Syl_2 (A_{2^k})$ from Theorem \ref{isomor}, the minimality of $S_{\mathop{\beta}}$ following from
Lemma \ref{Lem About product elem of C} which said that $S_{\mathop{\beta}}$ has to contain an element of type \texttt{T}. If the system has form ${{S}}_{\beta }^{'}$ then, according to Corollary \ref{generating pair}
 ${{{S}'}_{\beta }}$ has to compleated by elements: ${{\beta }_{{{i}_{m}};\tau }},\,\,\,\,{{\beta }_{{{i}_{_{m}}}}}$ or elements from which these elements can be expressed to get eltment $\tau$ (or element from T), so a new system has at least $k$ elements. The necessity of $\tau$ or element of type \texttt{T} follows from Lemma \label{about transposition} and Corollary \ref{About generating distance}. So if we take in consideration Corollary \ref{generating pair} and corollary from Remark \ref{doesnt contain}, then we see that $rk(Syl_2(A_{2^k}))=k$.

For example a minimal system of generators for $Syl_2(A_{8})$ can be constructed by following way, for convenience let us consider the next system:





\shorthandoff{"}
\def\vertex{\scriptscriptstyle\cdot}
\def\objectstyle{\scriptstyle}
\xy <1cm,0cm>:
(10,3.5)="C0"*{\ }+<-1.2ex,-0.45ex>*{ \beta_0  },
(11,3)="C1"*{\vertex}+<-1.2ex,-0.45ex>*{},
(10.5,2)="C5"*{\vertex}+<-1.2ex,-0.45ex>*{},
(11.5,2)="C6"*{\vertex}+<-1.2ex,-0.45ex>*{},
(10.33,1)="C7"*{\vertex}+<-1.2ex,-0.45ex>*{}, 
(10.66,1)="C8"*{\vertex}+<-1.2ex,-0.45ex>*{},
(11.33,1)="C9"*{\vertex}+<-1.2ex,-0.45ex>*{}, 
(11.66,1)="C10"*{\vertex}+<-1.2ex,-0.45ex>*{},
(13,3)="C2"*{\vertex}+<1.2ex,-0.45ex>*{},
(12.5,2)="C11"*{\vertex}+<-1.2ex,-0.45ex>*{},
(13.5,2)="C12"*{\vertex}+<-1.2ex,-0.45ex>*{},
(12.33,1)="C13"*{\vertex}+<-1.2ex,-0.45ex>*{},
(12.66,1)="C14"*{\vertex}+<-1.2ex,-0.45ex>*{},
(13.33,1)="C15"*{\vertex}+<-1.2ex,-0.45ex>*{},  
(13.66,1)="C16"*{\vertex}+<-1.2ex,-0.45ex>*{},  
(12,4)="L0"*{\bullet}+<0ex,1.5ex>*{1},
"C2";"L0"**@{-},
"C1";"C5"**@{-},
"C1";"C6"**@{-},
"C5";"C7"**@{--},  
"C5";"C8"**@{--},
"L0";"C1"**@{-},
"C6";"C9"**@{--},
"C6";"C10"**@{--},
"C2";"C11"**@{-},
"C2";"C12"**@{-},
"C11";"C13"**@{--},
"C11";"C14"**@{--},
"C12";"C15"**@{--},
"C12";"C16"**@{--},
(0,3.5)="A0"*{\ }+<-1.2ex,-0.45ex>*{ \tau},
(1,3)="A1"*{\vertex}+<-1.2ex,-0.45ex>*{},
(0.5,2)="A5"*{\bullet}+<-1.2ex,-0.45ex>*{1},
(1.5,2)="A6"*{\vertex}+<-1.2ex,-0.45ex>*{},
(0.33,1)="A7"*{\vertex}+<-1.2ex,-0.45ex>*{},
(0.66,1)="A8"*{\vertex}+<-1.2ex,-0.45ex>*{},
(1.33,1)="A9"*{\vertex}+<-1.2ex,-0.45ex>*{},
(1.66,1)="A10"*{\vertex}+<-1.2ex,-0.45ex>*{},
(3,3)="A2"*{\vertex}+<1.2ex,-0.45ex>*{},
(2.5,2)="A11"*{\vertex}+<-1.2ex,-0.45ex>*{},
(3.5,2)="A12"*{\bullet}+<-1.2ex,-0.45ex>*{1},
(2.33,1)="A13"*{\vertex}+<-1.2ex,-0.45ex>*{},
(2.66,1)="A14"*{\vertex}+<-1.2ex,-0.45ex>*{},
(3.33,1)="A15"*{\vertex}+<-1.2ex,-0.45ex>*{},
(3.66,1)="A16"*{\vertex}+<-1.2ex,-0.45ex>*{},
(2,4)="A4"*{\vertex}+<0ex,1.5ex>*{},
"A2";"A4"**@{-},
"A1";"A5"**@{-},
"A1";"A6"**@{-},
"A5";"A7"**@{--},
"A5";"A8"**@{--},
"A4";"A1"**@{-},
"A6";"A9"**@{--},
"A6";"A10"**@{--},
"A2";"A11"**@{-},
"A2";"A12"**@{-},
"A11";"A13"**@{--},
"A11";"A14"**@{--},
"A12";"A15"**@{--},
"A12";"A16"**@{--},
(5,3.5)="B0"*{\ }+<-1.2ex,-0.45ex>*{ \beta_1 },
(6,3)="B1"*{\bullet}+<-1.2ex,-0.45ex>*{1}, 
(5.5,2)="B5"*{\vertex}+<-1.2ex,-0.45ex>*{},
(6.5,2)="B6"*{\vertex}+<-1.2ex,-0.45ex>*{},
(5.33,1)="B7"*{\vertex}+<-1.2ex,-0.45ex>*{},
(5.66,1)="B8"*{\vertex}+<-1.2ex,-0.45ex>*{},
(6.33,1)="B9"*{\vertex}+<-1.2ex,-0.45ex>*{},
(6.66,1)="B10"*{\vertex}+<-1.2ex,-0.45ex>*{},
(8,3)="B2"*{\vertex}+<1.2ex,-0.45ex>*{},
(7.5,2)="B11"*{\vertex}+<-1.2ex,-0.45ex>*{},
(8.5,2)="B12"*{\vertex}+<-1.2ex,-0.45ex>*{}, 
(7.33,1)="B13"*{\vertex}+<-1.2ex,-0.45ex>*{},
(7.66,1)="B14"*{\vertex}+<-1.2ex,-0.45ex>*{},
(8.33,1)="B15"*{\vertex}+<-1.2ex,-0.45ex>*{},
(8.66,1)="B16"*{\vertex}+<-1.2ex,-0.45ex>*{},
(7,4)="B4"*{\vertex}+<0ex,1.5ex>*{},
"B2";"B4"**@{-},
"B1";"B5"**@{-},
"B1";"B6"**@{-},
"B5";"B7"**@{--},
"B5";"B8"**@{--},
"B4";"B1"**@{-},
"B6";"B9"**@{--},
"B6";"B10"**@{--},
"B2";"B11"**@{-},
"B2";"B12"**@{-},
"B11";"B13"**@{--},
"B11";"B14"**@{--},
"B12";"B15"**@{--},
"B12";"B16"**@{--},
\endxy

 Consequently, in such way we construct second generating system for $A_{2^k}$ of $k$ elements that is less than in \cite{Iv}, and this system is minimal.


Let us consider function of Morse \cite{Shar} $f:\,{D^2}\to \mathbb{R}$ that painted at pict. 2 and graph of Kronrod-Reeb \cite{Maks} which obtained by contraction every set's component  of level of ${{f}^{-1}}(c)$ in point. Group of automorphism of this graph is isomorphic to $Sy{{l}_{2}}{{S}_{{{2}^{k}}}}$ where $k=2$ in general case we have regular binary rooted tree for arbitrary $k\in \mathbb{N}$.


\begin{figure}[h]
\begin{minipage}[h]{0.49\linewidth}
\center{\includegraphics[width=0.9\linewidth]{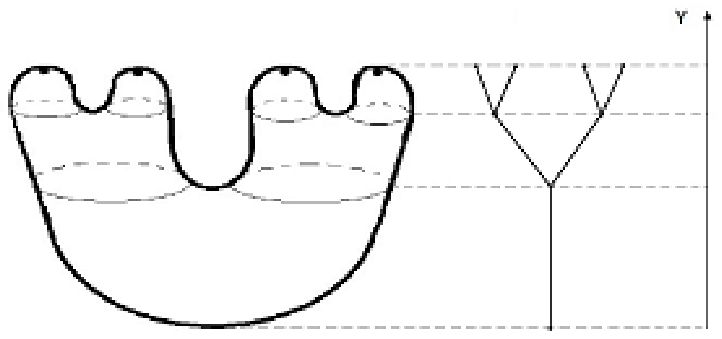} \\ Picture 2.}
\end{minipage}
\end{figure}
According to investigations of \cite{Maks2} for $D^2$ we have that $Syl_{2}S_{2^k} > G_k \simeq Syl_{2} A_{2^k}$ is quotient group of group of diffeomorphisms which stabilize function and isotopic to identity. Analogously to investigations of \cite{Maks, Maks2} there is short exact sequence
$0\to {{\mathbb{Z}}^{m}}\to {{\pi }_{1}}{{O}_{f}}(f)\to G\to 0$, where $G$-group of automorphisms Reeb's (Kronrod-Reeb) graph\cite{Maks} and $O_f(f)$ is orbit under action of group of diffeomorphisms, so it could be way to transfer it for a group $Sy{{l}_{2}}(S_{2^k})$, where $m$ in ${{\mathbb{Z}}^{m}}$ is number of inner edges (vertices) in Reeb's graph, in case for $Syl_{2}S_{4}$ we have $m=3$.

Higher half of projection of manifold from pic. 2 can be deteremed by product of the quadratic forms $-({{(x+4)}^{2}}+{{y}^{2}})({{(x+3)}^{2}}+{{y}^{2}})({{(x-3)}^{2}}+{{y}^{2}})({{(x-4)}^{2}}+{{y}^{2}})=z$ in points $(-4,0) (-3,0) (3,0) (4,0) $ it reach a maximum value 0. Generally there is $-d_{1}^{2}d_{2}^{2}d_{3}^{2}d_{4}^{2}=z$.

Also from Statement \ref{comm} and corollary from it about $(AutX^{[k]})'$ can be deduced that derived length of $Syl_2 A_2^k$ is not always equal to $k$ as it was said in Lemma 3 of \cite{Dm} because in case $A_{2^k}$ if $k=2$ its $Syl_2 A_4 \simeq K_4$ but $K_4$ is abelian group so its derived length is 1.

\section{ Conclusion }
The proof of minimality of constructed generating set was done, also the description of the structure $Syl_2 A_{2^n}$ and its property was founded.

\end{document}